\newdefinition{definition}{Definition}
\newtheorem{theorem}{Theorem}
\newtheorem{proposition}[theorem]{Proposition}
\newproof{proof}{Proof}
\newtheorem{lemma}{Lemma}
\newdefinition{remark}{Remark}
\newdefinition{example}{Example}
\newcommand{\no}{{\mathbb{N}}}
\begin{document}

\begin{frontmatter}
\title{The classification of abelian groups generated by time-varying automata and by Mealy automata  over the binary alphabet}
\author[rvt]{Adam Woryna}
\ead{adam.woryna@polsl.pl}
\address[rvt]{Silesian University of Technology, Institute of Mathematics, ul. Kaszubska 23, 44-100 Gliwice, Poland}

\begin{abstract}
For every natural number $n$, we classify abelian groups generated by an $n$-state time-varying automaton over the binary alphabet, as well as by an $n$-state Mealy automaton over the binary alphabet.
\end{abstract}

\end{frontmatter}

\section{Introduction}

In the theory of computation  time-varying automata over a finite alphabet are finite-state transducers which constitute a natural generalization of  Mealy-type automata as they allow to change both the transition function and the output function in successive steps of processing input sequences of letters  into output sequences (see~\cite{1}). These automata in turn constitute a subclass in the class of time-varying automata over a changing alphabet (see~\cite{4}). In group theory all these types of transducers turned out to be  a useful tool for defining and studying groups of automorphisms of certain rooted trees, which helped in the discovery of interesting  geometric and algebraic properties and the dynamics of various types of groups associated with their actions on these trees. The intensive  study of Mealy automata in relation to  automorphism groups has  continued for last three  decades, and the extraordinary properties of these groups are discussed in  a great deal of papers (for the comprehensive works see~\cite{34,2,26,27,3}). On the other hand, the  investigation of automorphism groups as groups defined by time-varying automata is a quite new approach, which we introduced in~\cite{4} (for other results on this subject see~\cite{22,23,5,6,7,21}).

If an automaton $A$ is invertible, then each of its states defines an automorphism of the corresponding rooted tree. The group generated by  these automorphisms (i.e. by automorphisms corresponding to all the states of $A$) is called the group generated by the automaton $A$ and is usually denoted by $G(A)$.

\begin{definition}
Let $n$ and $k$ be natural numbers and let $G$ be an abstract group. We say that  $G$ is generated by an $n$-state time-varying (Mealy) automaton over a $k$-letter alphabet if there is an invertible time-varying (Mealy) automaton $A$ with an $n$-element set of states which works over a $k$-letter alphabet such that $G$ is isomorphic to the group $G(A)$ generated by this automaton.
\end{definition}

Given two natural numbers $n$, $k$ and a class of abstract groups,  the natural question arises: which groups from this class are generated by an $n$-state (time-varying or Mealy) automaton over a $k$-letter alphabet? This problem was  studied so far only in the case of Mealy-type automata and it turned out to be  difficult even for small values of $n$ and $k$ and for classes containing algebraically well known constructions, such as abelian groups, free groups, free products of finite groups, and many others.

It is worth noting that there is no general methods for deducing even the basic algebraic properties of the group generated by an automaton directly from the  structure of this automaton. For example, for a long time it was an open problem  whether a free non-abelian group is generated by a Mealy automaton  (see~\cite{25} solving this problem). However, a candidate for the solution was known and studied since 80's last century (the so-called Aleshin-Vorobets automaton --  see~\cite{8,12}). Until now, we do not know if the free non-abelian group of rank two is generated by a $2$-state Mealy automaton or by  a 2-state time-varying automaton over a finite alphabet. On the other hand, we provided  for this group a natural construction of a 2-state time-varying automaton over an unbounded changing alphabet (see~\cite{6}).

The full classification of groups generated by Mealy automata is known only in the simplest non-trivial case, i.e. when $n=k=2$ (\cite{2}). Apart from that, for  the classes of abelian groups and finite groups the solutions for  Mealy automata are known only in the case $(n,k)=(3,2)$ (see Theorems~3,4 in the extensive work~\cite{15} devoted to the classification of all groups generated by 3-state Mealy automata over the binary alphabet).

\section{The results}
The goal of this paper is  providing for every natural number $n$ the full classification of abelian groups generated by an $n$-state time-varying automaton over the binary alphabet as well as by an $n$-state Mealy automaton over the binary alphabet.

Let us denote the following classes of abelian groups:
\begin{itemize}
\item $\mathcal{TVA}(n)$ -- the class of abelian groups generated by an $n$-state time-varying automaton over the binary alphabet,
\item $\mathcal{MA}(n)$ -- the class of abelian groups generated by an $n$-state Mealy automaton over the binary alphabet,
\item $\mathcal{AB}_2(n)$ -- the class of  abelian groups of rank not greater than $n$ in which the torsion part is a 2-group,
\item $\mathcal{FA}(n)$ -- the class of  free abelian groups of rank not greater than $n$,
\item $\mathcal{EA}_2(n)$ -- the class of  elementary abelian 2-groups of rank not greater than $n$.
\end{itemize}

The main result of the paper  is the  following characterization.

\begin{theorem}\label{t1}
$\mathcal{TVA}(1)=\mathcal{EA}_2(1)$, and if $n>1$, then $\mathcal{TVA}(n)=\mathcal{AB}_2(n)$.
\end{theorem}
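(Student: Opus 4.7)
The plan is to treat the $n=1$ case by direct inspection and, for $n>1$, to prove the two inclusions of $\mathcal{TVA}(n)=\mathcal{AB}_2(n)$ independently. For $n=1$, an invertible $1$-state time-varying automaton $A$ over $\{0,1\}$ has its unique state forced to transition back to itself on both inputs, so the induced tree automorphism is determined by a sequence of output permutations $\sigma_1,\sigma_2,\ldots$ of $\{0,1\}$, each an involution. Squaring the automorphism applies $\sigma_k^2=\mathrm{id}$ at every level, so the generator has order at most $2$ and $G(A)$ is either trivial or $\mathbb{Z}/2\mathbb{Z}$; both groups lie in $\mathcal{EA}_2(1)$ and are trivially realised, yielding $\mathcal{TVA}(1)=\mathcal{EA}_2(1)$.

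For $\mathcal{TVA}(n)\subseteq\mathcal{AB}_2(n)$ with $n>1$, I would combine two ingredients. First, $G(A)$ is abelian and generated by its $n$ states, so in its invariant-factor decomposition $\mathbb{Z}^r\oplus\mathbb{Z}/d_1\oplus\cdots\oplus\mathbb{Z}/d_k$ with $d_1\mid\cdots\mid d_k$ and $d_i>1$, the total length $r+k$ is at most $n$, giving the rank bound. Second, $\mathrm{Aut}(T_2)$ is pro-$2$ — its level-$k$ quotient has $2$-power order $2^{2^k-1}$ — so every finite subgroup of $\mathrm{Aut}(T_2)$ is a $2$-group. Since the torsion part of a finitely generated abelian group is finite, the torsion part of $G(A)\le\mathrm{Aut}(T_2)$ is a $2$-group, whence each $d_i$ is a power of $2$ and $G(A)\in\mathcal{AB}_2(n)$.

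The reverse inclusion $\mathcal{AB}_2(n)\subseteq\mathcal{TVA}(n)$ is the constructive heart of the argument. Given $G\cong\mathbb{Z}^r\oplus\mathbb{Z}/2^{k_1}\oplus\cdots\oplus\mathbb{Z}/2^{k_s}$ with $r+s\le n$, I would synthesise an $n$-state invertible time-varying automaton over $\{0,1\}$ whose $n$ states implement: $r$ commuting adding-machine-type generators of infinite order; $s$ commuting generators of the prescribed $2$-power orders, each realised as an element of the Sylow $2$-subgroup of $\mathrm{Aut}(T_2)$ via a level-by-level portrait with non-trivial toggles at exactly the right levels; and, if needed, identity states to reach exactly $n$ states.

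The main obstacle will be assembling these ingredients into a single time-varying automaton so as to enforce pairwise commutativity and the prescribed orders simultaneously, while avoiding any extra relations that would collapse $G(A)$. Here the flexibility of the time-varying framework — the freedom to vary the transition and output functions level by level — is decisive: I would expect the construction to proceed by an inductive, level-by-level design in which the local action of every state at level $k$ is prescribed so that the accumulated relations among the states in $G(A)$ are precisely those defining $G$, and no more.
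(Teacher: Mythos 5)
Your treatment of the case $n=1$ and of the inclusion $\mathcal{TVA}(n)\subseteq\mathcal{AB}_2(n)$ is correct and essentially identical to the paper's: the rank bound is immediate from the $n$ generators, and the fact that torsion in $Aut(\{0,1\}^*)$ has $2$-power order is exactly Proposition~\ref{inclu} (your appeal to the level quotients of order $2^{2^k-1}$ is the same computation; to make it airtight you should justify why an element of finite order retains its order in some finite level quotient, which is the short argument with the witness words $w_i$ that the paper supplies).

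The genuine gap is the reverse inclusion $\mathcal{AB}_2(n)\subseteq\mathcal{TVA}(n)$, which is the bulk of the paper and which your proposal reduces to a declared intention. You correctly name the obstacle --- packing the commuting generators of prescribed orders into exactly $n$ states without extra relations --- but you do not overcome it, and it is not a formality: by Proposition~3.1 of \cite{17} no $n$-state binary \emph{Mealy} automaton generates the free abelian group of rank $n$, so ``one adding machine per state'' cannot work level-homogeneously, and something specific to the time-varying setting must be exploited. The paper's device is a partition $N_1\cup\cdots\cup N_n$ of the level set ($\no$, or $\{1,\ldots,R\}$ in the purely torsion case) with $|N_j|=r_j$ for a generator of order $2^{r_j}$ and $|N_j|=\infty$ for an infinite-order generator: the state $a_j$ carries the transposition $\tau$ and a transition into a designated sink state exactly at the levels in $N_j$, and satisfies $a_{j,i}=(a_{j,i+1},a_{j,i+1})$ at all other levels. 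This interleaving simultaneously forces commutativity, pins down the orders, and (by an induction on levels yielding $2^{\mu_{j,t}}\mid k_j$ for all $t$) excludes extra relations in a product $\prod_j a_{j,1}^{k_j}=id$. A second point your sketch misses is that one state must serve as the identity/sink for the others' recursions and so cannot naively be an independent generator; the paper recovers the $n$-th independent generator by making $a_1$ a delayed copy of $a_2$, e.g. $b_{1,1}=a_{1,1}a_{2,1}^{-1}$ of order $2^{r_1}$ in Proposition~\ref{p2}, and $a_{1,1}=(id,a_{2,2})$ independent of $a_{2,1}$ in Proposition~\ref{p3}. Until a construction of this kind is specified and the absence of extra relations verified, the reverse inclusion is not proved. (The padding with identity states to pass from rank $<n$ to exactly $n$ states, which you do mention, is fine and matches the paper's closing observation.)
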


\begin{theorem}\label{t2}
$\mathcal{MA}(n)=\mathcal{FA}(n-1)\cup \mathcal{EA}_2(n)$. In particular,  there are exactly $2n$ abelian groups generated by an $n$-state Mealy automaton over the binary alphabet.
\end{theorem}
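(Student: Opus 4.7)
The inclusion $\mathcal{MA}(n)\subseteq\mathcal{TVA}(n)$ is immediate (every Mealy automaton is a constant time-varying automaton), so by Theorem~\ref{t1}, $\mathcal{MA}(n)\subseteq\mathcal{AB}_2(n)$ for $n>1$: every abelian $G\in\mathcal{MA}(n)$ already has rank $\le n$ and $2$-group torsion. For $n=1$, a direct inspection of the two $1$-state binary Mealy automata---passive yielding $\{1\}$ and active yielding $\mathbb{Z}/2$---gives $\mathcal{MA}(1)=\{1,\mathbb{Z}/2\}=\mathcal{EA}_2(1)=\mathcal{FA}(0)\cup\mathcal{EA}_2(1)$. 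The work for $n>1$ therefore splits into two tasks: sharpening the upper bound to $\mathcal{FA}(n-1)\cup\mathcal{EA}_2(n)$, and exhibiting matching realizations.

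The key tool for the sharpening is self-similarity. Because a Mealy automaton is time-invariant, the recursion $\phi_q=(\phi_{q|_0},\phi_{q|_1})\sigma^{\epsilon_q}$ has $q|_0,q|_1$ in the same finite state set $Q$, so every restriction of every $g\in G=G(A)$ is produced by the $n$ original states, and $R:g\mapsto(g|_0,g|_1)$ is an injective homomorphism $\mathrm{Stab}_G(1)\hookrightarrow G\times G$. I would derive from this three refinements. First, if every state were passive, a level-by-level induction forces $\phi_{q_i}=e$ for each $i$, so whenever $G\ne 1$ at least one state is active and the root-action map $\pi:G\to\mathbb{Z}/2$ is onto. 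Second, if some $g\in G$ had order $2^k$ with $k\ge 2$, then $g^2\in\mathrm{Stab}_G(1)$ is a non-trivial element whose wreath decomposition $R(g^2)$ equals $(g|_0 g|_1, g|_0 g|_1)$ (active case, using abelianness) or $((g|_0)^2,(g|_1)^2)$ (passive case); tracking this through the self-similar recursion and using the finiteness of the state set, an iterative argument based on the level-by-level triviality principle forces $g^2=e$, a contradiction. An analogous iteration rules out an infinite-order element coexisting with any non-trivial torsion, so either $G$ is torsion-free or $G$ is elementary abelian of exponent $2$.

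For the sharp rank bound in the torsion-free case, assume $G\cong\mathbb{Z}^n$ is freely generated by $\phi_{q_1},\dots,\phi_{q_n}$. Expanding every commutator relation $[\phi_{q_i},\phi_{q_j}]=1$ via the wreath recursion produces, at each of the two first-level coordinates, a product equation whose two sides are words in the generators; since each $\phi_{q_i|_*}$ is itself a generator and the group is free abelian, each such equation is a $\mathbb{Z}$-linear relation among $\phi_{q_1},\dots,\phi_{q_n}$. Running this systematically over all pairs $(i,j)$ together with the observation from the first refinement produces a non-zero $v=\sum c_i\phi_{q_i}\in G$ satisfying $R(v)=(v,v)$; the level-by-level argument then forces $v=e$, contradicting $\mathbb{Z}$-independence. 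In the illustrative $n=2$ case, the commutativity of two active states already yields the relation $\phi_{q_1|_0}-\phi_{q_1|_1}=\phi_{q_2|_0}-\phi_{q_2|_1}$, and a short case-analysis on the possible wreath patterns produces the contradiction directly.

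For realization I would exhibit, for each $0\le r\le n$, an $n$-state automaton generating $(\mathbb{Z}/2)^r$ via the recursive family $a_1=(a_1,a_1)\sigma$ and $a_{i+1}=(a_i,a_i)$ for $i<r$ (each $a_i$ is an involution, they pairwise commute, and they are $\mathbb{Z}$-independent), padded with identity states; and for each $0\le r\le n-1$, an $n$-state automaton generating $\mathbb{Z}^r$ consisting of one identity state together with $r$ pairwise commuting adding-machine-type states (the $r=1$ case being the classical $a=(e,a)\sigma$; for $r\ge 2$ one uses a standard self-similar realization such as a Gaussian-integer-type multidimensional adding machine requiring exactly $r+1$ states), padded with identity states. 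Together with the upper bound this yields $\mathcal{MA}(n)=\mathcal{FA}(n-1)\cup\mathcal{EA}_2(n)$, and the count is $|\mathcal{FA}(n-1)|+|\mathcal{EA}_2(n)|-|\{1\}|=n+(n+1)-1=2n$. The main obstacle I anticipate is the sharp rank bound: Theorem~\ref{t1} supplies only $\mathrm{rk}(G)\le n$, and improving this by one unit requires the Mealy-specific fact that every $\phi_{q_i|_*}$ lies back in the generating set, producing the explicit $\mathbb{Z}$-linear dependence; the torsion refinements then use the injective first-level decomposition combined with Theorem~\ref{t1}, and the realizations and counting are routine.
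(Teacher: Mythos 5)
Your overall architecture (a torsion dichotomy plus the sharp rank bound for the upper inclusion, matching realizations, then the count $n+(n+1)-1=2n$) matches the paper's, and your realizations and counting are essentially fine. But the two steps that carry all the weight are asserted rather than proved. First, the dichotomy ``$G$ is torsion-free or of exponent $2$'': your plan is to take $g$ of order $2^k$ with $k\ge 2$, write $R(g^2)=(g|_0g|_1,\,g|_0g|_1)$, and let ``an iterative argument based on the level-by-level triviality principle'' force $g^2=id$. This iteration does not close: passing to sections only halves the order once per level (if $g$ is active then $o(g|_0g|_1)=o(g)/2$), the element $g|_0g|_1$ is generally not a state (so finiteness of $Q$ is not directly usable), and nothing in your sketch produces the non-trivial root permutation of some $g^2|_w$ that a contradiction would require. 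The paper's proof rests on two further ideas that are absent from your proposal: (i) its Lemma~\ref{l2}, that $I_G\subseteq G_1$ forces $I_G=\{id\}$, so any non-trivial torsion yields an involution $h$ that is \emph{active}; and (ii) for such an $h$ and any active $g$, commutativity gives $g^2=(gh)^2$ with $gh\in G_1$, whence $G^2=G_1^2$; only then does the induction on $|w|$ in Lemma~\ref{l1} show $g^2|_w\in G_1$ for every $w$ and hence $g^2=id$. Without the active involution, the identity $G^2=G_1^2$ --- the engine of the whole argument --- is unavailable, and your ``analogous iteration'' ruling out coexistence of infinite order and torsion is likewise unsubstantiated.

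Second, the sharp rank bound. You claim that expanding all commutator relations ``produces a non-zero $v=\sum c_i\phi_{q_i}$ with $R(v)=(v,v)$,'' but you give no construction of $v$ and no reason such an element exists; the one relation you actually derive for $n=2$, namely $q_1|_0-q_1|_1=q_2|_0-q_2|_1$, is a single linear condition and does not by itself yield a fixed point of the section map, and the promised ``short case-analysis'' is not carried out. This step is precisely the non-trivial content of Proposition~3.1 of~\cite{17}, which the paper simply cites; if you intend to reprove it, you must actually exhibit the dependence. A minor remark: your elementary abelian realization $a_1=(a_1,a_1)\sigma$, $a_{i+1}=(a_i,a_i)$ differs from the paper's cyclic construction but does work, since the product over a non-empty index set flips the letter in the minimal chosen position exactly once.
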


For the proof of Theorem~\ref{t1} (Section~\ref{sec5}), we find at first some restrictions on abelian groups generated by time-varying automata over the binary alphabet. These restrictions imply the inclusion $\mathcal{TVA}(n)\subseteq\mathcal{AB}_2(n)$ for each $n\geq 1$, as well as the equality $\mathcal{TVA}(1)=\mathcal{EA}_2(1)$. Next, for every $n\geq 1$ and every group $G\in \mathcal{AB}_2(n)$ of rank $n$, we provide an explicit  construction of a time-varying automaton $A$ over the binary alphabet  which generates $G$ (see Propositions~\ref{p1}--\ref{p3}).
In the  construction of the automaton $A$, we distinguish between three cases: where the group $G$ is cyclic, where  $G$ is non-cyclic and  has a non-trivial torsion part, and finally, where $G$ is a non-cyclic, torsion-free group. Note that in the last case $G$ must be a free abelian group, which follows from  the property that every finitely generated, abelian group is a direct sum of cyclic groups. To derive the required property of our construction, we use the language of wreath recursions, which is popular in the study of groups generated by automata, and  which arises from an embedding of the group $G(A)$  into the permutational wreath product (for more on the method involving wreath recursions in the case of Mealy automata see, for example, \cite{2,3}, and in the case of time-varying automata  -- see \cite{21}).

In the proof of Theorem~\ref{t2} (Section~\ref{sec6}), for  an arbitrary group $G$  generated by a Mealy automaton  over the binary alphabet, we deduce certain relations  between the first level stabilizer of $G$, the set $I_G$ of involutions  and the set $G^2$ of squares. We show (Proposition~\ref{pp}) a quite  unexpected dichotomy, which holds in the case when $G$ is  abelian. Namely,  the relations  imply in this case that  $G$  is either a free abelian group or an elementary abelian 2-group. Next,  we  introduce and study  a construction of the Mealy automata generating  elementary abelian 2-groups. We also  use the known construction of the Mealy  automata generating  free abelian groups (so-called ``sausage'' automata -- see \cite{2}), as well as the restriction for the rank of  free abelian groups generated by   Mealy automata over the binary alphabet (see Proposition~3.1 from~\cite{17}).

We hope that the above characterizations, together with the involved constructions and the methods presented in the proofs, will encourage  to develop further study of the groups generated by time-varying automata. This may be useful when investigating some important, computational problems, which are open in the class of groups generated by time-varying automata, but have a simple solution in the class of groups generated by Mealy automata.
Let us mention the  word problem for groups (which is decidable in the latter class). It is known that there is a time-varying automaton $A$ over an arbitrary, unbounded changing alphabet such that the group $G(A)$ has undecidable word problem. This follows from the fact that there exist finitely generated, residually finite groups with undecidable word problem, as well as from some unconstructive method of defining an automaton realization for an arbitrary, finitely generated, residually finite group (the so-called diagonal realization -- see \cite{4,7}). However, there is not known any  explicit and naturally defined  construction of  an automaton $A$ such that the group $G(A)$ has undecidable word problem.  Moreover, in the class of time-varying automata over a finite alphabet, even the question on the  existence of such an automaton is open.

In view of the above study, also  the following problem is interesting and far from trivial: given $n$ and $k$, how many (pairwise non-isomorphic) groups are contained in the class $\mathcal{GTV}(n,k)$ of the groups generated by an $n$-state time-varying automaton over a $k$-letter alphabet? Obviously, if  $n=1$ or $k=1$, then the class $\mathcal{GTV}(n,k)$ is finite. Further, if $n\leq n'$ and $k\leq k'$, then $\mathcal{GTV}(n,k)$ is contained in $\mathcal{GTV}(n',k')$ (see the last paragraph of Section~\ref{sec5} for the corresponding reasoning). Directly by Theorem~\ref{t1}, we obtain $\mathcal{AB}_2(2)\subseteq\mathcal{GTV}(2,2)$, which implies that $\mathcal{GTV}(2,2)$ contains infinitely many groups. But, if $k>1$ or $n>1$, then there are uncountable many $n$-state time-varying automata over a $k$-th letter alphabet. Thus  the natural problem arises: find the smallest $n$ and $k$ such that the class $\mathcal{GTV}(n,k)$ is uncountable. In particular, is the class $\mathcal{GTV}(2,2)$ uncountable?  Note that the class of all groups generated by an $n$-state Mealy  automaton over a $k$-letter alphabet is finite (there are exactly $(n^k\cdot k!)^n$ invertible Mealy automata with the $n$-element set of states over a $k$-th letter alphabet), which implies that the class of all groups generated by Mealy automata is countable.

The paper is organized as follows. Section~\ref{sec3} contains definitions of an automaton over a finite alphabet and the group generated by the automaton transformations of the corresponding rooted tree. In Section~\ref{sec4}, we describe the notion of the wreath recursion. In the last two sections, we present the proofs of our characterization.

\section{Automata and groups generated by automata}\label{sec3}

Let $\no=\{1,2,\ldots\}$ be the set of natural numbers, and let $X$ be a nonempty, finite set (a finite alphabet). A time-varying automaton over $X$ is a quadruple
$$
A=(X, Q, \varphi, \psi),
$$
where $Q$ is a finite set (set of states), $\varphi=(\varphi_i)_{i\in\no}$ is a sequence of so-called transition functions $\varphi_i\colon Q\times X\to Q$, and $\psi=(\psi_i)_{i\in\no}$ is a sequence of so-called output functions $\psi_i\colon Q\times X\to X$.  If the sequences $\psi$ and $\varphi$ are constant, then the automaton $A$ is called a Mealy automaton. In this case the sequences $\varphi$, $\psi$ are identified with, respectively, the transition function $\varphi_1$ and the output function $\psi_1$, i.e. we can write $A=(X, Q, \varphi_1, \psi_1)$. If for every $i\in\no$ and every $q\in Q$  the mapping $\sigma_{i,q}\colon x\mapsto \psi_i(q,x)$ (the so-called labeling of the state $q$ in the $i$-th transition of $A$) is a bijection on the alphabet, then the automaton $A$ is called invertible.

The tree $X^*$ of finite words over a finite alphabet $X$ consists of finite sequences of the form $x_1x_2\ldots x_l$ ($l\in\no$), where $x_i\in X$ ($1\leq i\leq l$), together with the empty sequence (empty word) denoted by $\epsilon$. The tree $X^*$ is an example of a rooted tree with the empty word as the root, and two words  are adjacent if and only if they are of the form $w$ and $wx$ for some $w\in X^*$, $x\in X$.

Let $A=(X, Q, \varphi, \psi)$ be a time-varying automaton over $X$. We consider the transformations
$$
q_i\colon X^*\to X^*,\;\;\; q\in Q,\;\; i\in\no
$$
defined in the following recursive way:
\begin{itemize}
\item $q_i(\epsilon)=\epsilon$ for all $q\in Q$, $i\in\no$,
\item $q_i(xw)=x'q'_{i+1}(w)$ for any $q\in Q$, $i\in\no$, $x\in X$, $w\in X^*$,
where $x'=\psi_i(q,x)=\sigma_{i,q}(x)$, $q'=\varphi_i(q,x)$.
\end{itemize}

The image $q_i(w)$ of any word $w\in X^*$ can be easily found directly from the diagram of the automaton $A=(X, Q, \varphi, \psi)$. We define  such a diagram  as an infinite, directed, locally finite graph  with the set $Q\times \mathbb{N}$ as the set of vertices. Each  vertex $(q,i)\in Q\times \mathbb{N}$  is labeled by the labeling $\sigma_{i,q}$ of the state $q$ in the $i$-th transition of $A$. Two vertices $(q,i), (q',i')\in Q\times \mathbb{N}$ are connected with a directed edge which starts at  $(q,i)$ and ends at $(q',i')$ if and only if $i'=i+1$ and there is  $x\in X$ such that $\varphi_i(q,x)=q'$; we label this edge by the  letter $x$. In particular, for every $i\in\no$, $q\in Q$ and $x\in X$, there is a unique edge which is labeled by $x$ and starts at $(q,i)$. Now, if $w=x_1\ldots x_l\in X^*$ is any word and if  $\tau_1, \ldots, \tau_l$ are the  labels of the consecutive vertices on the directed path which is labeled  by $w$ and which starts at  $(q,i)$, then we have $q_i(w)=\tau_1(x_1)\ldots\tau_l(x_l)$.

The transformation $q_i$ is called the automaton transformation corresponding to the state $q$ in its $i$-th transition. If the automaton $A$ is invertible, then each $q_i$ is an element of the automorphism group $Aut(X^*)$ of the tree $X^*$, i.e. $q_i$ is a permutation of the set $X^*$ preserving the root $\epsilon$ and the vertex adjacency. For every $i\in\no$ we denote by $G(A_i)$ the subgroup of $Aut(X^*)$ generated by automorphisms $q_i$ for $q\in Q$:
$$
G(A_i):=\langle q_i\colon q\in Q\rangle.
$$
The group
$$
G(A):=G(A_1)
$$
is called the group generated by the automaton $A$.

\section{The language of wreath recursions}\label{sec4}

Let $A=(Q, X, \varphi, \psi)$ be a time-varying automaton over a finite alphabet $X$. We define the transformations
$$
q_i|_w\colon X^*\to X^*,\;\;\;i\in\no,\;\;q\in Q,\;\;w\in X^*
$$
in the following recursive way:
\begin{itemize}
\item $q_i|_\epsilon=q_i$ for all $q\in Q$, $i\in\no$,
\item $q_i|_{xw}=q'_{i+1}|_w$ for all $q\in Q$, $i\in\no$, $x\in X$, $w\in X^*$,
where $q'=\varphi_i(q,x)$.
\end{itemize}
The transformation $q_i|_w$ is called the section of the transformation $q_i$ at the word $w$. Assuming that  $A$ is invertible, we can use the labelings of the states and the sections at one-letter words to describe the elements of the groups $G(A_i)$ ($i\in\no$) in the algebraical language of wreath products. Namely, if $X=\{x_1, \ldots, x_k\}$, then for every $i\in\no$ the mapping
$$
q_i\to (q_i|_{x_1}, \ldots, q_i|_{x_k})\sigma_{i, q}, \;\;\;q\in Q
$$
induces an embedding
$$
\phi_i\colon G(A_i)\to G(A_{i+1})\wr_X Sym(X)
$$
of the group $G(A_i)$ into the permutational wreath product of the group $G(A_{i+1})$ and the symmetric group of the alphabet, that is into the semidirect product
$$
G(A_{i+1})^{X}\rtimes Sym(X)
$$
with the natural action of $Sym(X)$  on the direct power  $G(A_{i+1})^X$ by permuting the factors.
Further, we identify any element $g\in G(A_i)$ with its image $\phi_i(g)$ and  call the relation $g=\phi_i(g)$ the wreath recursion of $g$. In particular, for the wreath recursion of any generator $q_i\in G(A_i)$ ($q\in Q$) we have:
$$
q_i=((r_1)_{i+1}, \ldots, (r_k)_{i+1})\sigma_{i,q},
$$
where $r_{j}=\varphi_i(q,x_j)$ for $1\leq j\leq k$. In general, if
$$
g=(h_1, \ldots, h_k)\pi
$$
is the wreath recursion of an arbitrary element $g\in G(A_i)$, then the element $h_j\in G(A_{i+1})$ ($1\leq j\leq k$), denoted further by $g|_{x_j}$, is called the section of $g$ at the letter $x_j\in X$, and the permutation $\pi$, denoted further by $\sigma_g$, is called the root permutation of $g$. In particular, for the wreath recursion of the inverse $g^{-1}$ we have
$$
g^{-1}=((g|_{y_{1}})^{-1}, \ldots,(g|_{y_{k}})^{-1})\pi^{-1},
$$
where $y_{j}={\pi^{-1}}(x_j)$ ($1\leq j\leq k$), and if an element $g' \in G(A_i)$ is defined by the wreath recursion
$$
g'=(g'|_{x_{1}}, g'|_{x_{2}},\ldots, g'|_{x_{k}}) \pi',
$$
then   the wreath recursion of the product $g\cdot g'$ satisfies:
$$
g\cdot g'=(g|_{x_{1}}\cdot g'|_{z_{1}}, \ldots, g|_{x_{k}}\cdot g'|_{z_{k}}) \pi\pi',
$$
where $z_{j}=\pi(x_j)$ for $1\leq j\leq k$.

\begin{remark}
If the root permutation $\sigma_g$ is trivial, then it is usually omitted in the wreath recursion, and we write $g=(g|_{x_1}, \ldots, g|_{x_k})$; also, if  the sections $g|_{x_j}$ ($1\leq j\leq k$) are all trivial, then $g$  is identified with its root permutation, and we write $g=\sigma_g$.
\end{remark}

We also define  the sections $g|_w$ ($g\in G(A_i)$, $w\in X^*$) recursively:
$g|_{\epsilon}=g$, $g|_{xw}=(g|_x)|_w$ for all $x\in X$, $w\in X^*$.

It is worth noting that if $A$ is a Mealy automaton, then for any $i\in\no$ and any $q\in Q$ the automaton transformation $q_i\colon X^*\to X^*$ of the state $q$ in its $i$-th transition coincides with the automaton transformation $q_1$. In particular, if $A$ is invertible, then $G(A_i)=G(A)$ for every $i\in\no$, and if $g\in G(A)$ and $w\in X^*$, then $g|_w\in G(A)$.

Given a nonempty word $w=x_1x_2\ldots x_l\in X^*$  and an element $g\in G(A)$, we can use the notions of a section and a root permutation  to compute the image $g(w)$ of $w$ under $g$ in the following way:
\begin{equation}\label{gw}
g(w)=g_0(x_1)g_1(x_2)\ldots  g_{l-1}(x_l)=\sigma_{g_0}(x_1)\sigma_{g_1}(x_2)\ldots \sigma_{g_{l-1}}(x_l),
\end{equation}
where $g_0:=g$ and $g_i:=g|_{x_1x_2\ldots x_i}$ for $1\leq i\leq l-1$.

\section{The proof of Theorem 1}\label{sec5}

The inclusion $\mathcal{TVA}(n)\subseteq \mathcal{AB}_2(n)$ follows from the obvious observation that the rank of a group generated by an $n$-state time-varying automaton is not greater than $n$, as well as from the following result

\begin{proposition}\label{inclu}
In the group $Aut(\{0,1\}^*)$ the order of any element of finite order is  a power of two.
\end{proposition}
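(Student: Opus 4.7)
The plan is to reduce the statement to the finite levels of the binary tree. For each $k\geq 1$ let $T_k$ denote the truncation of $\{0,1\}^*$ to words of length at most $k$, and let $Aut(T_k)$ be the automorphism group of this finite rooted tree. The key step is to show by induction on $k$ that $Aut(T_k)$ is a $2$-group.

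For $k=1$ we have $Aut(T_1)\cong Sym(\{0,1\})$, which has order $2$. For the inductive step, I would use the iterated wreath-product decomposition $Aut(T_k)\cong Aut(T_{k-1})\wr Sym(\{0,1\})$, so that every element can be written as $g=(g|_0,g|_1)\sigma$ with $g|_0,g|_1\in Aut(T_{k-1})$ and $\sigma\in Sym(\{0,1\})$, in the wreath-recursion language of Section~\ref{sec4}. If $\sigma$ is trivial, then the multiplication rule gives $g^m=(g|_0^m,g|_1^m)$, so the order of $g$ equals the least common multiple of the orders of $g|_0$ and $g|_1$; by the induction hypothesis both are powers of $2$, hence so is $|g|$. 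If $\sigma$ is the non-trivial transposition, then $g^2=(g|_0\cdot g|_1,g|_1\cdot g|_0)$ has trivial root permutation, so by the previous case its order is some $2^c$; since the root permutation of $g^m$ is $\sigma^m$, which is non-trivial whenever $m$ is odd, $|g|$ must be even, and $|g|=2\cdot |g^2|=2^{c+1}$.

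The second step is to pass from the finite truncations to $Aut(\{0,1\}^*)$. Every $g\in Aut(\{0,1\}^*)$ restricts to an automorphism $g_k\in Aut(T_k)$ for each $k$, and the restriction is a homomorphism, so $|g_k|$ divides $|g_{k+1}|$. If $g$ has finite order $n$, then each $|g_k|$ divides $n$, so this chain of divisors stabilizes at some value $N\leq n$; but then $g^N$ acts trivially on every finite level, hence $g^N$ is the identity of $Aut(\{0,1\}^*)$, forcing $N=n$. Since each $|g_k|$ is a power of $2$ by the first step, so is $n$.

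The main obstacle is really the bookkeeping in the inductive step, specifically the case of a non-trivial root permutation: one must invoke the multiplication rule for wreath recursions carefully to ensure that $|g|$ is exactly $2\cdot|g^2|$ and not smaller. Once the finite quotients $Aut(T_k)$ have been identified as $2$-groups, the passage from finite to infinite levels is essentially formal.
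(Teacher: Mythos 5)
Your proof is correct and follows essentially the same route as the paper: reduce to the finite truncations $T_k$ of the binary tree and use that $Aut(T_k)$ is a $2$-group. The only (minor) difference is that the paper simply cites the fact that $Aut(T_k)$ is the $k$-fold iterated wreath product $C_2\wr\cdots\wr C_2$ of order $2^{2^k-1}$ and invokes Lagrange, whereas you re-derive the $2$-group property by an element-wise wreath-recursion induction; likewise your stabilizing chain of divisors $|g_1|\mid |g_2|\mid\cdots$ is just a repackaging of the paper's choice of a level $m$ on which the restriction of $g$ already has full order.
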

\begin{proof}
Let $g\in Aut(\{0,1\}^*)$ be an element of a finite order $d:=o(g)$. For every $1\leq i< d$ there is a word $w_i\in \{0,1\}^*$ such that $g^i(w_i)\neq w_i$. Let $m=\max\{|w_i|\colon 1\leq i<d\}$ and let $\overline{g}$ be the restriction of $g$ to the subtree $T:=\{0,1\}^{\leq m}$ consisting of all binary words  of  length  not greater than $m$. Then $\overline{g}\in Aut(T)$ and $o(\overline{g})=o(g)=d$. It is well known that the group $Aut(T)$ is isomorphic to the $m$-iterated wreath product $C_2\wr C_2\wr\ldots\wr C_2$ of the cyclic group of order two, and that the order of this wreath product is equal to $2^{2^m-1}$. In particular, we have the divisibility $d| 2^{2^m-1}$, which implies the claim.\qed
\end{proof}

The next two propositions concern the generation of cyclic groups by time-varying automata over the binary alphabet equipped, respectively, with a single state and  two states.

\begin{proposition}
The only groups generated by a time-varying automaton over the binary alphabet with a single state is the trivial group and the group of order two, i.e. we have $\mathcal{TVA}(1)=\mathcal{EA}_2(1)$.
\end{proposition}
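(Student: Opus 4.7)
The plan is to exploit the degeneracy that occurs when the state set has size one: the transition function has no choice, so $\varphi_i(q,x)=q$ for every $i$ and every $x$, and the only freedom in the automaton lies in the sequence of labelings $\sigma_{i,q}$, each of which is either the identity or the transposition on $\{0,1\}$ (invertibility requires bijectivity). First I would write down the wreath recursions: $q_i = (q_{i+1},q_{i+1})\sigma_{i,q}$ for every $i\in\no$, and, iterating, $q_i|_w = q_{i+|w|}$ for every word $w$. In particular, the group $G(A)$ is cyclic, generated by $q_1$.

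Next I would use formula~\eqref{gw} to read off the action of $q_1$ on an arbitrary word $w=x_1x_2\ldots x_l$: because every section of $q_1$ along a prefix of $w$ is some $q_{j}$, whose root permutation is $\sigma_{j,q}$, we obtain
\[
q_1(x_1x_2\ldots x_l)=\sigma_{1,q}(x_1)\sigma_{2,q}(x_2)\ldots\sigma_{l,q}(x_l).
\]
Thus $q_1$ acts coordinatewise by involutions (or identities), so $q_1^2$ fixes every word, hence $q_1^2=1$ in $Aut(\{0,1\}^*)$. Consequently $G(A)=\langle q_1\rangle$ has order at most two, so $G(A)\in\mathcal{EA}_2(1)$, giving the inclusion $\mathcal{TVA}(1)\subseteq\mathcal{EA}_2(1)$.

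For the reverse inclusion I would exhibit two explicit automata: taking all $\sigma_{i,q}$ equal to the identity yields $q_1=1$ and thus $G(A)$ is trivial; taking $\sigma_{1,q}=(0\,1)$ (and, say, all later $\sigma_{i,q}$ trivial) gives $q_1\neq 1$ but $q_1^2=1$, so $G(A)\cong C_2$. Both members of $\mathcal{EA}_2(1)$ therefore lie in $\mathcal{TVA}(1)$.

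There is no serious obstacle here; the whole argument rests on the single observation that with one state the wreath recursion collapses to the coordinatewise formula above. The only thing to be mildly careful about is to invoke the description of the action through root permutations and sections exactly as given in formula~\eqref{gw}, so that the reduction from a general recursion to the explicit product of involutions is rigorous rather than implicit.
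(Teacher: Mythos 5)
Your proof is correct and follows essentially the same route as the paper: both rest on the wreath recursion $q_i=(q_{i+1},q_{i+1})\pi_i$ forced by the single state, deduce $q_1^2=id$, and then note that both the trivial group and $C_2$ are realized. The only difference is cosmetic — you make the step ``$q_i^2=(q_{i+1}^2,q_{i+1}^2)$ implies $q_i^2=id$'' explicit via the coordinatewise action formula, where the paper leaves it implicit.
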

\begin{proof}
If $A=(\{0,1\}, \{a\}, \varphi, \psi)$ is a time-varying automaton with a single state $a$, then the corresponding automaton transformations $a_i\colon \{0,1\}^*\to \{0,1\}^*$ ($i\in\no$) satisfy the following wreath recursions: $a_i=(a_{i+1}, a_{i+1})\pi_i$, where $\pi_i\in Sym(\{0,1\})$. In particular, we have $a_i^2=(a_{i+1}^2, a_{i+1}^2)$ for every $i\in\no$, which implies: $a_i^2=id$ for every $i\in\no$. Now, if $\pi_i=id$ for every $i\in\no$, then the group $G(A)=\langle a_1\rangle $ is trivial, and if $\pi_i\neq id$ for some $i\in\no$, then $G(A)$ is the group of order two.\qed
\end{proof}

\begin{proposition}\label{p1}
Let $G$ be an infinite cyclic group $C_\infty$ or a finite cyclic group $C_{2^r}$ of order $2^r$, where $r\in\no$. Then there is a 2-state time-varying automaton $A$ over the binary alphabet such that $G\simeq G(A)$.
\end{proposition}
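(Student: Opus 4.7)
The plan is to use essentially the same $2$-state automaton in both cases, built from the classical adding machine (odometer), and to control the order of the generator by choosing when---if ever---to switch its non-trivial action off. Fix the state set $Q=\{a,e\}$, let $\sigma$ denote the non-trivial permutation of $\{0,1\}$, and arrange that state $e$ acts trivially at every transition, i.e.\ $\varphi_i(e,x)=e$ and $\sigma_{i,e}=id$ for all $i\in\no$ and $x\in\{0,1\}$; this makes $e_i=id$ for every $i$, so $G(A)=\langle a_1,e_1\rangle=\langle a_1\rangle$ is cyclic, and it remains only to prescribe the order of $a_1$.

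For $G=C_\infty$, I would impose the adding-machine rules at every level: $\varphi_i(a,0)=e$, $\varphi_i(a,1)=a$, $\sigma_{i,a}=\sigma$, giving the wreath recursion $a_i=(id,a_{i+1})\sigma$ for all $i$. A direct application of the multiplication formula from Section~\ref{sec4} yields $a_i^2=(a_{i+1},a_{i+1})$, and hence $a_1^{2k}=(a_2^k,a_2^k)$ for all $k\geq 1$. If $a_1^n=id$ for some $n\geq 1$, then the top-level permutation $\sigma^n$ must be trivial, forcing $n=2k$ and $a_2^k=id$; iterating this halving indefinitely forces $n$ to be divisible by every power of $2$, a contradiction. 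Hence $a_1$ has infinite order and $G(A)\simeq C_\infty$.

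For $G=C_{2^r}$ with $r\geq 1$, I would keep the adding-machine definition in transitions $i=1,\ldots,r$, but for every $i\geq r+1$ set $\varphi_i(a,x)=a$ and $\sigma_{i,a}=id$, which forces $a_i=id$ for all $i\geq r+1$. Then $a_r=(id,id)\sigma=\sigma$ has order $2$, and the identity $a_i^2=(a_{i+1},a_{i+1})$ (valid for $i\leq r-1$), combined with the fact that $\sigma_{a_i}=\sigma$ makes $o(a_i)$ even, gives $o(a_i)=2\cdot o(a_{i+1})$ by descending induction. Consequently $o(a_1)=2^r$ and $G(A)\simeq C_{2^r}$.

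The only non-trivial step is verifying the clean identity $a_i^2=(a_{i+1},a_{i+1})$ in spite of the asymmetry between the two coordinates of $a_i=(e_{i+1},a_{i+1})\sigma$: the asymmetry is cancelled precisely because the non-trivial root permutation swaps the two coordinates in the product formula, producing $e_{i+1}\cdot a_{i+1}=a_{i+1}$ in the first slot and $a_{i+1}\cdot e_{i+1}=a_{i+1}$ in the second. Everything else is routine bookkeeping with the formulas of Section~\ref{sec4}.
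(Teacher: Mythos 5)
Your proof is correct and takes essentially the same approach as the paper: a two-state automaton with one permanently trivial state and one odometer-like state whose non-trivial action is switched off after $r$ transitions (finite case) or never (infinite case), with the order determined by the same halving argument on powers. The only cosmetic difference is that the paper extracts the order from the explicit power formula $a_{2,i}^s=(a_{2,i+1}^{\lfloor s/2\rfloor}, a_{2,i+1}^{\lceil s/2\rceil})\tau^s$ rather than from your identity $a_i^2=(a_{i+1},a_{i+1})$.
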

\begin{proof}
Let us define the subsets $N_1, N_2\subseteq \no$ as follows: $N_1=\{1,2,\ldots, r\}$, $N_2=\no$. For each $k\in\{1,2\}$ let
$$
A[k]=(\{0,1\}, \{a_1, a_2\}, \varphi, \psi)
$$
be the 2-state time-varying automaton over the binary alphabet in which the sequences $\varphi$, $\psi$ of transition and output functions are defined as follows:
\begin{eqnarray*}
\varphi_i(a_j,x)&=&\left\{
\begin{array}{ll}
a_1,& {\rm if}\;j=2,\;i\in N_k,\; x=1,\\
a_j, &{\rm otherwise},
\end{array}
\right.\\
\psi_i(a_j,x)&=&\left\{
\begin{array}{ll}
\tau(x),& {\rm if}\;j=2,\;i\in N_k,\\
x, &{\rm otherwise}
\end{array}
\right.
\end{eqnarray*}
for all $i\in\no$, $j\in\{1,2\}$, $x\in\{0,1\}$, where $\tau\in Sym(\{0,1\})$ is a transposition.

Let $a_{j,i}:=(a_{j})_i\in G(A[k]_i)$ ($j\in\{1,2\}$, $i\in\no$) be the automaton transformation of the state $a_j$ in its $i$-th transition.

In the case $k=1$ we obtain by the above formulae the following wreath recursions:
$$
a_{1,i}=(a_{1,i+1}, a_{1,i+1}),\;\;\;a_{2,i}=\left\{
\begin{array}{ll}
(a_{1, i+1}, a_{2,i+1})\tau, &{\rm if}\;1\leq i\leq r,\\
(a_{2,i+1}, a_{2,i+1}), &{\rm if}\; i>r.
\end{array}
\right.
$$
Hence $a_{1,i}=id$ for  $i\in\no$, $a_{2,i}=(id, a_{2,i+1})\tau$ for $1\leq i\leq r$, and $a_{2,i}=id$ for  $i>r$.
In particular, the order $o(a_{2,r})$ of the generator $a_{2,r}\in G(A[1]_r)$ is equal to 2, and for every $i\in\{1,\ldots, r\}$ we obtain by easy induction on $i$ the equality $o(a_{2, r+1-i})=2^i$. Hence $o(a_{2,1})=2^r$ and $G(A[1])=\langle a_{1,1}, a_{2,1}\rangle=\langle a_{2,1}\rangle\simeq C_{2^r}$.

In the case $k=2$ we have: $a_{1,i}=id$ for  $i\in\no$ and $a_{2,i}=(id, a_{2,i+1})\tau$ for $i\in\no$. From the last wreath recursion we obtain for any $i,s\in\no$ the following wreath recursion for the $s$-th power of $a_{2,i}$:
$$
a_{2,i}^s=(a_{2,i+1}^{\lfloor s/2\rfloor}, a_{2, i+1}^{\lceil s/2\rceil})\tau^s.
$$
In particular, if $a_{2,i}^s=id$ for some $i,s\in\no$, then $\tau^s=id$. Hence the number $s$ is even and $a_{2,i}^s=(a_{2,i+1}^{s/2}, a_{2, i+1}^{s/2})=id$. For every $j\in\no$ we obtain by easy induction on $j$ the divisibility $2^j\mid s$. In particular, the generator $a_{2,1}\in G(A[2])$ is of infinite order and $G(A[2])=\langle a_{1,1}, a_{2,1}\rangle=\langle a_{2,1}\rangle\simeq C_\infty$.\qed
\end{proof}

Further, let $n\geq 2$ and let $G$ be an arbitrary abelian group from the class $\mathcal{AB}_2(n)$ such that the rank of this group is equal to $n$. We provide the construction of an $n$-state time-varying automaton $A$ over the binary alphabet such that $G$ is isomorphic to the group $G(A)$ generated by this automaton. As we see in the following two propositions, our construction depends on whether $G$ contains a non-trivial torsion part.

\begin{proposition}\label{p2}
Let $n\in\no$ and let  $G$ be an abelian group of rank $n$ such that the torsion part of $G$ is a non-trivial 2-group. If $n\geq 2$, then there is an $n$-state time-varying automaton $A$ over the binary alphabet such that $G$ is isomorphic to  $G(A)$.
\end{proposition}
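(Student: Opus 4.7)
The plan is to exhibit explicitly, for a decomposition
$G\simeq C_{2^{r_1}}\oplus\cdots\oplus C_{2^{r_t}}\oplus C_\infty^s$ with $s+t=n$ and $t\geq 1$,
an $n$-state time-varying automaton whose states $a_1,\ldots,a_n$ produce at time $1$ a family of commuting, independent elements of orders $2^{r_1},\ldots,2^{r_t},\infty,\ldots,\infty$. The model to imitate is Proposition~\ref{p1}: there the one ``dummy'' state $a_1$ supplies the identity section that lets the worker $a_2$ realise the wreath recursion of either $A[1]$ (finite cyclic) or $A[2]$ (infinite cyclic); here we instead let $n$ workers take turns providing that identity section for one another.

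First I would choose pairwise disjoint sets $T_1,\ldots,T_n\subseteq\mathbb{N}$ of ``active times'' with $|T_\ell|=r_\ell$ for $\ell\leq t$ and $T_\ell$ infinite for $\ell>t$, placing the torsion blocks first in some order and interleaving the infinite ones afterwards, so that for every $i\in T_j$ there is an index $k(j,i)\neq j$ with $T_{k(j,i)}\cap\{i+1,i+2,\ldots\}=\emptyset$. Second, I would define $\varphi,\psi$ so that the wreath recursions read
\begin{equation*}
a_{j,i}\;=\;\begin{cases}
(a_{j,i+1},\,a_{j,i+1}) & \text{if } i\notin T_j,\\[2pt]
(a_{j,i+1},\,a_{k(j,i),i+1})\,\tau & \text{if } i\in T_j.
\end{cases}
\end{equation*}
By the choice of $k(j,i)$, the right section $a_{k(j,i),i+1}$ acts as the identity on $\{0,1\}^*$, so at each active step this mirrors exactly the wreath recursion of $a_2$ in $A[1]$.

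Third I would verify the three required properties. \emph{(i) Orders.} By downward induction on $T_j$, $o(a_{j,i})=2^{|T_j\cap[i,\infty)|}$; the doubling step $\bigl((e,a_{j,i+1})\tau\bigr)^{2s}=(a_{j,i+1}^s,a_{j,i+1}^s)$ is as in Proposition~\ref{p1}, and the inactive-time rule $(a_{j,i+1},a_{j,i+1})$ preserves the order. So $o(a_{j,1})=2^{r_j}$ (resp.\ $\infty$). \emph{(ii) Commutativity.} Since $T_j\cap T_k=\emptyset$ for $j\neq k$, at each level $i$ at most one of $a_{j,i},a_{k,i}$ has root permutation $\tau$; the wreath-recursion formula for the commutator $[a_{j,i},a_{k,i}]$ then expresses it as a commutator of sections at level $i+1$, and induction on tree depth closes the argument. \emph{(iii) Independence.} Given $\prod_j a_{j,1}^{n_j}=\mathrm{id}$, I would analyse the action of the product on words of increasing length via formula~\eqref{gw}: at the tree level corresponding to the smallest element of $T_j$, the disjointness of the active-time sets isolates the contribution of $a_{j,1}^{n_j}$, forcing $n_j\equiv 0$ modulo $o(a_{j,1})$.

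The main obstacle is the existence of the auxiliary index $k(j,i)$ at the very earliest active times, when no state has yet retired to become identically trivial. I expect to fix this by a localised modification of the construction at those times: for instance, picking the layout so that one torsion state (say $a_1$) exhausts its block first, but routing its own early active times through a tailored wreath recursion whose correctness (right order of $a_{1,1}$, commutativity with the remaining generators, independence) is checked by direct computation on the tree. Reconciling this boundary fix with all three verification steps above is the most delicate point of the argument.
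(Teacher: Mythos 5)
Your overall strategy is the paper's: pairwise disjoint sets of active times, one per state, with each active state borrowing an identity section from another state, and then the three verifications (orders by the doubling recursion, commutativity from disjointness of active times, independence by a level-by-level divisibility argument). Steps (i)--(iii) are sound as far as they go. But the construction as you state it cannot be instantiated: you require that for every $i\in T_j$ there is a $k(j,i)\neq j$ with $T_{k(j,i)}\cap\{i+1,i+2,\dots\}=\emptyset$, and for the block that finishes first (say $T_1=\{1,\dots,r_1\}$) this is impossible for \emph{every} $i\in T_1$, since any $T_k$ contained in $\{1,\dots,i\}\subseteq T_1$ would have to be empty by disjointness. You flag this as ``the most delicate point'' and defer it to an unspecified ``tailored wreath recursion ... checked by direct computation,'' so the one genuinely non-routine idea of the proof is missing.

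The paper's resolution of exactly this boundary problem is worth recording, because it is not a purely local patch. During its active times $i\in N_1=\{1,\dots,r_1\}$ the first state is given the recursion $a_{1,i}=(a_{1,i+1},a_{2,i+1})\tau$ (with $a_{1,r_1}=(a_{2,r_1+1},a_{2,r_1+1})\tau$), i.e.\ its right section points to the \emph{not-yet-active} state $a_2$, which satisfies $a_{2,i}=(a_{2,i+1},a_{2,i+1})$ there. One then replaces the generator $a_{1,1}$ by $b_{1,1}:=a_{1,1}a_{2,1}^{-1}$; the correction cancels the unwanted $a_2$-sections and yields the clean recursion $b_{1,i}=(b_{1,i+1},id)\tau$, $b_{1,r_1}=\tau$, whence $o(b_{1,1})=2^{r_1}$, while $\langle a_{1,1},\dots,a_{n,1}\rangle=\langle b_{1,1},a_{2,1},\dots,a_{n,1}\rangle$. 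Note in particular that $a_{1,1}$ itself need not have order $2^{r_1}$, so your requirement that the states themselves realize the orders $2^{r_1},\dots,2^{r_t},\infty,\dots,\infty$ at time $1$ is too strong; the direct-sum structure is exhibited only after this change of generating set, and the independence argument must then be run for $b_{1,1},a_{2,1},\dots,a_{n,1}$ (first extracting $2^{r_1}\mid k_1$ along levels $1,\dots,r_1$, then passing to level $r_1+1$ where $a_{1}$ has retired and the remaining blocks can be treated as in your step (iii)). Without this, or an equally concrete substitute, the proof is incomplete.
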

\begin{proof}
By the fundamental theorem of finitely generated abelian groups there are integers $1\leq d\leq n$ and $0\leq d'\leq n$ such that $d+d'=n$ and  the group $G$ is isomorphic to the direct sum
\begin{equation}\label{ds}
C_{2^{r_1}}\oplus \ldots\oplus C_{2^{r_d}}\oplus C_\infty^{d'}
\end{equation}
for some  $r_j\in\no$ ($1\leq j\leq d$), where $C_\infty^{d'}$ denotes the free abelian group of rank $d'$ in the case $d'>0$ or the trivial group in the case $d'=0$. Let us denote:
$$
M=\left\{\begin{array}{ll}
\no,&{\rm if}\;d'>0,\\
\{1,2,\ldots, R\},&{\rm if}\;d'=0,
\end{array}
\right.
$$
where $R=r_1+\ldots +r_d$, and let $N_1\cup\ldots\cup N_{n}$
be an arbitrary partition of the set $M$ in which:
$N_1=\{1,\ldots, r_1\}$, $|N_j|=r_j$ for $1<j\leq d$ and $|N_j|=\infty$ for $d<j\leq n$.

Let us assume that $n\geq 2$ and let $A$ be an $n$-state  time-varying automaton over the binary alphabet in which the set $Q$ of states consists of the symbols $a_1, \ldots, a_n$, and the sequences $\varphi=(\varphi_i)_{i\in\no}$, $\psi=(\psi_i)_{i\in\no}$ of transition and output functions are defined as follows:
\begin{eqnarray*}
\varphi_i(a_j,x)&=&\left\{
\begin{array}{ll}
a_1,&{\rm if}\;j\neq 1,\;i\in N_j,\;x=1,\\
a_2,&{\rm if}\;j=1,\;i\in N_1\setminus\{r_1\},\;x=1,\\
a_2,&{\rm if}\;j=1,\;i=r_1,\\
a_j,&{\rm otherwise},
\end{array}
\right.\\
\psi_i(a_j,x)&=&\left\{
\begin{array}{ll}
\tau(x),&{\rm if}\;i\in N_j,\\
x,&{\rm otherwise}
\end{array}
\right.
\end{eqnarray*}
for all $i\in\no$, $j\in\{1,2,\ldots, n\}$, $x\in\{0,1\}$, where $\tau\in Sym(\{0,1\})$ is a transposition. In the case $n=4$, $d=3$, $r_1=3$, $r_2=1$, $r_3=2$, $N_1=\{1,2,3\}$, $N_2=\{4\}$, $N_3=\{5,6\}$, $N_4=\mathbb{N}\setminus\{1,2,3,4,5,6\}$, the diagram of the automaton $A$ is depicted in Figure~\ref{fig} (for clarity, we replaced a large number of edges connecting  two given vertices and having the same direction with a one multi-edge and, since every multi-edge is labeled by the both letters of the alphabet, we omitted its labeling).
\begin{figure}
\centering
\includegraphics[width=13cm]{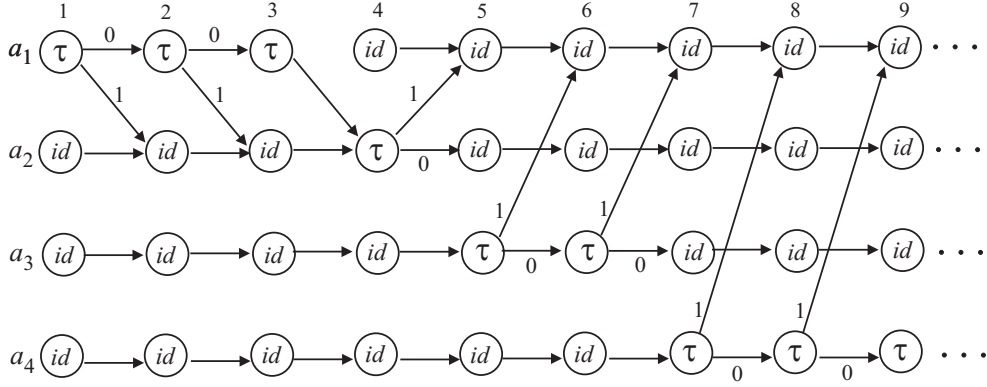}
\caption{The automaton  from Proposition~\ref{p2} for $n=4$, $d=3$, $r_1=3$, $r_2=1$, $r_3=2$.}
\label{fig}
\end{figure}

Let $a_{j,i}:=(a_j)_i$ ($1\leq j\leq n$, $i\in\no$) be the automaton transformation of the state $a_j\in Q$ in its $i$-th transition. Directly by the above formulae, we have the following wreath recursions for the automorphisms $a_{1,i}$ ($i\in\no$):
\begin{equation}\label{wst}
a_{1,i}=\left\{
\begin{array}{ll}
(a_{1,i+1} ,a_{2,i+1})\tau, &{\rm if}\;\;i\in N_1\setminus\{r_1\},\\
(a_{2,r_1+1}, a_{2,r_1+1})\tau, &{\rm if}\;\;i=r_1,\\
(a_{1,i+1}, a_{1,i+1}), &{\rm if}\;\;i\notin N_1.
\end{array}
\right.
\end{equation}
This implies: $a_{1,i}=id$ for $i>r_1$. Similarly, for every $2\leq j\leq n$ the wreath recursions for $a_{j,i}$ ($i\in\no$) satisfy: $a_{j,i}=(a_{j,i+1}, a_{1,i+1})\tau$ for every $i\in N_j$ and $a_{j,i}=(a_{j,i+1}, a_{j,i+1})$ for $i\notin N_j$. If $j\neq 1$ and $i\in N_j$, then $i\notin N_1$ and hence $i>r_1$. Consequently,  we have:
\begin{equation}\label{a}
a_{j,i}=\left\{
\begin{array}{ll}
(a_{j,i+1}, id)\tau, &{\rm if}\;\;j\neq 1\;\;{\rm and}\;\;i\in N_j,\\
(a_{j,i+1}, a_{j,i+1}), &{\rm if}\;\;j\neq 1\;\;{\rm and}\;\; i\notin N_j.
\end{array}
\right.
\end{equation}
By~(\ref{a}), we obtain for all $i, s\in\no$ and every $j\neq 1$:
$$
a_{j,i}^s=\left\{
\begin{array}{ll}
(a_{j,i+1}^{\lceil s/2\rceil},a_{j,i+1}^{\lfloor s/2\rfloor})\tau^s, &{\rm if}\;i\in N_j,\\
(a_{j,i+1}^s, a_{j,i+1}^s), &{\rm if}\; i\notin N_j.
\end{array}
\right.
$$
The  above formula implies  that the generator $a_{j,1}\in G(A)$ ($j\neq 1$) is of infinite order if and only if $|N_j|=\infty$, and if $|N_j|<\infty$ (which implies  $|N_j|=r_j$ by our assumption), then $o(a_{j,1})=2^{r_j}$.

Since $a_{2,i}=(a_{2,i+1}, a_{2,i+1})$ for  every $i\in N_1$, the wreath recursion of the element
$b_{1,i}:=a_{1,i}a_{2,i}^{-1}$ ($i\in N_1$)
satisfies:
\begin{equation}\label{b}
b_{1,i}=\left\{
\begin{array}{ll}
(b_{1, i+1}, id)\tau, &{\rm if}\;\;i\in N_1\setminus\{r_1\},\\
(id, id)\tau, &{\rm if}\;\; i=r_1.
\end{array}
\right.
\end{equation}
In particular, we obtain $o(b_{1,1})=2^{r_1}$.

By~(\ref{wst}) and~(\ref{a}), we also obtain the following condition:
if $a_{j,i}\neq (a_{j,i+1}, a_{j,i+1})$ for some $i\in\no$ and $1\leq j\leq n$, then $a_{j,i}=(a_{l,i+1}, a_{l', i+1})\pi$ for some $l,l'\in\{1,2,j\}$, $\pi\in\{id, \tau\}$, and then  for every $j'\neq j$ the element $a_{j',i}$ has the following wreath recursion: $a_{j',i}=(a_{j',i+1}, a_{j',i+1})$. This implies that for any $i\in\no$, $1\leq j,j'\leq n$, $x\in\{0,1\}$ there are $\pi\in\{id, \tau\}$ and $l,l'\in\{1,2, j, j'\}$ such that
\begin{eqnarray*}
a_{j,i}a_{j',i}|_x&=&a_{l,i+1}a_{l',i+1},\;\;\;\;\;a_{j',i}a_{j,i}|_x=a_{l',i+1}a_{l,i+1},\\
\sigma_{a_{j,i}a_{j',i}}&=&\sigma_{a_{j',i}a_{j,i}}=\pi.
\end{eqnarray*}
Consequently, the elements $a_{j,i}$ and $a_{j',i}$ commute for all $i\in\no$ and $1\leq j,j'\leq n$. Thus the group
$G(A)=\langle a_{1,1},\ldots, a_{n,1}\rangle=\langle b_{1,1}, a_{2,1}, \ldots, a_{n,1}\rangle$ is abelian.

Let $k_j$ ($1\leq j\leq n$) be arbitrary integers for which the product
$g=b_{1,1}^{k_1}\cdot a_{2,1}^{k_2}\cdot\ldots\cdot a_{n,1}^{k_n}$ represents the  trivial element in $G(A)$. By~(\ref{a})--(\ref{b}), we see that the root permutation $\sigma_g$  is equal to $\tau^{k_1}$. Hence $2\mid k_1$ and consequently $b_{1,1}^{k_1}=(b_{1,2}^{k_1/2}, b_{1,2}^{k_1/2})$. By~(\ref{a}), we have: $a_{j,1}^{k_j}=(a_{j,2}^{k_j}, a_{j,2}^{k_j})$ for  $2\leq j\leq n$. Thus $g|_0=g|_1=b_{1,2}^{k_1/2}\cdot a_{2,2}^{k_2}\cdot\ldots\cdot a_{n,2}^{k_n}$. Since $g|_w=id$ for every $w\in\{0,1\}^*$, a trivial induction on $i$ gives for every $i\in N_1$ and every  $w\in \{0,1\}^{i-1}$ the  divisibility $2^{i-1}\mid k_1$ and the equality $g|_w=b_{1,i}^{k_1/{2^{i-1}}}\cdot a_{2,i}^{k_2}\cdot\ldots\cdot a_{n,i}^{k_n}$. In particular $g|_w=b_{1,r_1}^{k_1/{2^{r_1-1}}}\cdot a_{2,r_1}^{k_2}\cdot\ldots\cdot a_{n,r_1}^{k_n}$ for every $w\in\{0,1\}^{r_1-1}$. Again by~(\ref{a})--(\ref{b}), we see that the root permutation of the last product is equal to $\tau^{k_1/{2^{r_1-1}}}$, which implies
$2^{r_1}\mid k_1$. Since $b_{1, r_1}|_0=b_{1,r_1}|_1=id$, we obtain: $g_w=a_{2,r_1+1}^{k_2}\cdot\ldots\cdot a_{n,r_1+1}^{k_n}$ for every $w\in \{0,1\}^{r_1}$ and hence:
\begin{equation}\label{g_w}
a_{2,r_1+1}^{k_2}\cdot\ldots\cdot a_{n,r_1+1}^{k_n}=id.
\end{equation}

Let us fix $i\in \no\setminus\{1,\ldots, r_1\}$. Let $l_j$ ($2\leq j\leq n$) be any integers for which the product
\begin{equation}\label{h}
h=a_{2,i}^{l_2}\cdot\ldots\cdot a_{n,i}^{l_n}
\end{equation}
represents the trivial element in the group $G(A_i)$. Since the sets $N_j$ ($2\leq j\leq n$) are pairwise disjoint, we have two possibilities: (i)  there is a unique $2\leq j_0\leq n$ such that $i\in N_{j_0}$, (ii) for every $2\leq j\leq n$ we have $i\notin N_j$. In the case (i), by the wreath recursion~(\ref{a}), we see that the root permutation $\sigma_h$ is equal to $\tau^{l_{j_0}}$. Since $h=id$, we obtain: $2\mid l_{j_0}$, and consequently, the sections $h|_0$, $h|_1$ coincide and they both arise from~(\ref{h}) by replacing the factor $a_{j_0, i}^{l_{j_0}}$ with $a_{j_0, i+1}^{l_{j_0}/2}$ and the factors $a_{j, i}^{l_{0}}$ for $j\neq j_0$ with $a_{j, i+1}^{l_{j}}$, i.e. we have:
$$
h|_0=h|_1=a_{2,i+1}^{l_2}\cdot\ldots\cdot a_{j_0-1, i+1}^{l_{j_0-1}}\cdot a_{j_0, i+1}^{l_{j_0}/2}\cdot a_{j_0+1, i+1}^{l_{j_0+1}}\cdot\ldots \cdot a_{n,i+1}^{l_n}.
$$
In the  case (ii) the sections $h|_0$, $h|_1$ both arise from $h$ by replacing each $a_{j, i}^{l_{j}}$ ($2\leq j\leq n$) with $a_{j, i+1}^{l_{j}}$, i.e. we have: $h|_0=h|_1=a_{2,i+1}^{l_2}\cdot \ldots \cdot  a_{n,i+1}^{l_n}$. Since $h|_w=id$ for every $w\in\{0,1\}^*$, we obtain by the trivial induction on $t\in\no$ that for every $t\in\no$ and every $2\leq j\leq n$ the number $l_j$ is divisible by $2^{\mu_{j,t}}$, where
$$
\mu_{j,t}=|N_j\cap \{i,i+1, \ldots, i+t-1\}|.
$$
Thus, if for some $2\leq j\leq n$ the set $N_j$ is infinite, then the sequence $(\mu_{j,t})_{t\in\no}$ is unbounded, which implies  $l_j=0$. If $|N_j|<\infty$ for some $2\leq j\leq n$, then we obtain the divisibility: $2^{\mu_j}\mid l_j$, where $\mu_j=|N_j\cap \{i'\in\no\colon i'\geq i\}|$. By our assumption, we have $N_j\cap \{1,2,\ldots, r_1\}=\emptyset$ and $|N_j|=r_j$ for every $2\leq j\leq n$. In particular, if $i=r_1+1$, then  we obtain in that case: $\mu_j=|N_j|=r_j$, and hence $o(a_{j,1})\mid l_j$. Applying this result to the equality~(\ref{g_w}), we obtain: $k_j=0$ for every $2\leq j\leq n$ such that $|N_j|=\infty$ and $o(a_{j,1})\mid k_j$ for every $2\leq j\leq n$ such that $|N_j|<\infty$. Consequently, we obtain
$$
G(A)\simeq \langle b_{1,1}\rangle\oplus \langle a_{2,1}\rangle\oplus\ldots\oplus\langle a_{n,1}\rangle\simeq G.
$$
\qed
\end{proof}

Now, it remains to consider the case of   abelian groups of rank $n$ ($n\geq 2$) without torsion elements. Note that every such a group must be the free abelian group of rank $n$, which follows from the property that every finitely generated abelian group is a direct sum of cyclic groups.

\begin{proposition}\label{p3}
If $n\geq 2$, then there is an $n$-state time-varying automaton $A$ over the binary alphabet which generates a group  isomorphic to  the free abelian group of rank $n$.
\end{proposition}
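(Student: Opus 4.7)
The plan is to construct an explicit $n$-state time-varying automaton $A$ over $\{0,1\}$ generating the free abelian group $C_{\infty}^n$, by adapting the template of Proposition~\ref{p2} to the torsion-free setting (since every finitely generated torsion-free abelian group of rank $n$ is isomorphic to $C_{\infty}^n$, a single group has to be realized).

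First I would fix a partition $\no = N_1 \cup \cdots \cup N_n$ into $n$ pairwise disjoint infinite subsets and define $A$ with states $a_1,\ldots,a_n$ so that, on level $i \in N_j$, the state $a_j$ carries the nontrivial labeling $\tau$ (while every other state carries the identity labeling and loops to itself), with the transition $\varphi_i(a_j, 1)$ routed to a designated ``target'' state $a_{s(j)}$. The resulting wreath recursions would read $a_{j,i} = (a_{j,i+1}, a_{s(j),i+1})\tau$ for $i \in N_j$ and $a_{j,i} = (a_{j,i+1}, a_{j,i+1})$ for $i \notin N_j$. As in Proposition~\ref{p2}, I would then pass from $a_{1,1},\ldots,a_{n,1}$ to an auxiliary generating system $b_{1,1},\ldots,b_{n,1}$ consisting of suitable products of the $a_{k,1}$, arranging that each $b_{j,i}$ acquires the sausage form $b_{j,i} = (b_{j,i+1}, id)\tau$ on levels $i \in N_j$, which forces $b_{j,1}$ to have infinite order.

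Commutativity of $G(A)$ would then follow exactly as in Proposition~\ref{p2}: since at most one state acts with nontrivial root permutation on any given level, the root permutations of $a_{j,i} a_{j',i}$ and $a_{j',i} a_{j,i}$ coincide and their sections match recursively. For the $\mathbb{Z}$-independence of $b_{1,1},\ldots,b_{n,1}$ I would mimic the induction from Proposition~\ref{p2}: assuming that $g = \prod_{j} b_{j,1}^{k_j}$ equals the identity, a level-by-level analysis of the root permutations of successive sections of $g$ along $\{0,1\}^*$ would force $2^{|N_j \cap \{1,\ldots,i\}|} \mid k_j$ for every $i$, and since each $N_j$ is infinite this gives $k_j = 0$ for all $j$.

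The delicate point is the construction itself. In Proposition~\ref{p2} the absorption target $a_1$ was eventually trivial (from level $r_1 + 1$ onward), which was precisely what produced the sausage form. Here no generator is trivial at any level, so the sausage structure cannot come for free from a trivial state and must instead emerge only after passing to the combined generators $b_{j,1}$. Selecting the partition $\{N_j\}$ and the absorption map $s\colon\{1,\ldots,n\}\to\{1,\ldots,n\}$ so that the $b_{j,i}$ simultaneously acquire the sausage form on their respective level sets is the main combinatorial hurdle of the proof; once this is in place, the verifications of commutativity and independence are straightforward adaptations of the arguments already carried out for Proposition~\ref{p2}.
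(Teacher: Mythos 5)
Your reduction to the single group $C_\infty^n$ is correct, and the commutativity and independence arguments you outline would go through once a construction with the stated properties were in hand. The genuine gap is that the construction itself --- which you yourself flag as ``the main combinatorial hurdle'' --- is the entire content of the proposition and is not supplied; moreover, the template you commit to (a partition of $\no$ into $n$ infinite sets $N_1,\ldots,N_n$ with each state active exactly on its own set) appears to obstruct the very sausage form you need. Since every $N_j$ is infinite, no state is trivial at any level, so for $i\in N_j$ the section $a_{j,i}|_1=a_{s(j),i+1}$ is never trivial and must be cancelled by some other factor of $b_j$. Comparing root permutations level by level shows that any candidate $b_j=\prod_k a_k^{m_k}$ with $\sigma_{b_{j,i}}=\tau$ precisely for $i\in N_j$ must have $m_j$ odd and $m_k$ even for $k\neq j$; the natural choice $b_j=a_ja_{s(j)}^{-1}$ then cancels the section at $1$ on levels $i\in N_j$ (where $a_{s(j),i}$ is diagonal), but on levels $i\in N_{s(j)}$ the factor $a_{s(j),i}^{-1}$ contributes a root permutation $\tau$ and non-diagonal sections, so $b_j$ is ``active'' on $N_j\cup N_{s(j)}$ rather than on $N_j$ alone. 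These enlarged activity sets overlap for distinct $j$, which breaks the divisibility induction you sketch, and it is not clear that any choice of $s$ or of the products repairs this.

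The paper's proof sidesteps the difficulty by \emph{not} giving all $n$ states infinite activity sets. It partitions $\no\setminus\{1\}$ into only $n-1$ infinite sets $N_2,\ldots,N_n$, lets $a_2,\ldots,a_n$ be sausage-type generators with absorption target $a_1$, and makes $a_1$ active nowhere: $a_1$ is the identity from level $2$ onward, so $a_{j,i}=(a_{j,i+1},id)\tau$ for $i\in N_j$ comes for free, exactly as in Proposition~\ref{p2}. The $n$-th independent generator is produced by the single transition $\varphi_1(a_1,1)=a_2$, which yields $a_{1,1}=(id,a_{2,2})$; its independence from $a_{2,1},\ldots,a_{n,1}$ is then detected not by a sausage form but by comparing the two first-level sections of a putative relation $g=a_{1,1}^{k_1}\cdots a_{n,1}^{k_n}=id$, namely $g|_0=a_{2,2}^{k_2}\cdots a_{n,2}^{k_n}$ versus $g|_1=a_{2,2}^{k_1+k_2}\cdots a_{n,2}^{k_n}$, which force first $k_2=\cdots=k_n=0$ and then $k_1=0$. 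Replacing your symmetric template by this asymmetric one is the missing idea.
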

\begin{proof}
Let $n\in\no\setminus\{1\}$ and let $N_2\cup\ldots\cup N_{n}$
be an arbitrary partition of the set $\no\setminus\{1\}$ in which $|N_j|=\infty$ for every $2\leq j\leq n$.

Let $A$ be an $n$-state  time-varying automaton over the binary alphabet in which the set $Q$ of states consists of the symbols $a_1, \ldots, a_n$, and the sequences $\varphi=(\varphi_i)_{i\in\no}$, $\psi=(\psi_i)_{i\in\no}$ of transition and output functions are defined as follows:
\begin{eqnarray*}
\varphi_i(a_j,x)&=&\left\{
\begin{array}{ll}
a_1,&{\rm if}\;j\neq 1,\;i\in N_j,\;x=1,\\
a_2,&{\rm if}\;j=1,\;i=1,\;x=1,\\
a_j,&{\rm otherwise},
\end{array}
\right.\\
\psi_i(a_j,x)&=&\left\{
\begin{array}{ll}
\tau(x),&{\rm if}\;j\neq 1,\;i\in N_j,\\
x,&{\rm otherwise}
\end{array}
\right.
\end{eqnarray*}
for all $i\in\no$, $j\in\{1,2,\ldots, n\}$, $x\in\{0,1\}$.

As before, we denote by $a_{j,i}:=(a_j)_i$ ($1\leq j\leq n$, $i\in\no$) the automaton transformation of the state $a_j\in Q$ in its $i$-th transition. By the above formulae, we have the following wreath recursions for the automorphisms $a_{1,i}$ ($i\in\no$):
$$
a_{1,i}=\left\{
\begin{array}{ll}
(a_{1,i+1} ,a_{2,i+1}), &{\rm if}\;\;i=1,\\
(a_{1,i+1}, a_{1,i+1}), &{\rm if}\;\;i\neq 1.
\end{array}
\right.
$$
This implies that $a_{1,i}=id$ for $i\neq 1$, and consequently we obtain:
\begin{equation}\label{a2}
a_{1,i}=\left\{
\begin{array}{ll}
(id ,a_{2,i+1}), &{\rm if}\;\;i=1,\\
(id, id), &{\rm if}\;\;i\neq 1.
\end{array}
\right.
\end{equation}
The wreath recursions for $a_{j,i}$ ($i\in\no$, $j\neq 1$) satisfy~(\ref{a}), and similarly as in the previous proof, we obtain that the group $G(A)$ is abelian. Let $k_j$ ($1\leq j\leq n$) be arbitrary integers for which the product
$g=a_{1,1}^{k_1}\cdot\ldots\cdot a_{n,1}^{k_n}$ represents the  trivial element in $G(A)$. By~(\ref{a}) and by~(\ref{a2}), we obtain:
\begin{eqnarray}
g|_0&=&a_{2,2}^{k_2}\cdot\ldots\cdot a_{n,2}^{k_n}, \label{g0}\\
g|_1&=&a_{2,2}^{k_1+k_2}\cdot\ldots\cdot a_{n,2}^{k_n}.\label{g1}
\end{eqnarray}
Since $g|_0=id$, we can apply to the product~(\ref{g0}) the same reasoning as in the previous proof and obtain that for every $t\in\no\setminus\{1\}$ and every $2\leq j\leq n$ the number $k_j$ is divisible by $2^{\mu_{j,t}}$, where $\mu_{j,t}=|N_j\cap \{2,\ldots, t\}|$. Similarly,  since $g|_1=id$,  we use the product~(\ref{g1}) and obtain that the number $k_1+k_2$ is divisible by $2^{\mu_{2,t}}$ for every $t\in\no\setminus\{1\}$. Since each  $N_j$ is an infinite set, we obtain: $k_j=0$ for every $1\leq j\leq n$. Thus $G(A)$ is isomorphic to the free abelian group of rank $n$, which finishes the proof of the proposition. \qed
\end{proof}

Finally, we need to observe that if $G$ is a group generated by an $n$-state ($n\in\no$) time-varying (Mealy) automaton $A$ over a finite alphabet $X$, then for every $n'>n$ the group $G$ is also generated by an $n'$-state time-varying (Mealy) automaton over $X$. Indeed,  the required $n'$-state automaton is obtained by adding certain new symbols $q_1, \ldots, q_{n'-n}$ to the set of states of $A$ together with the following formulae for transition and output functions: $\varphi_i(q_j,x)=q_j$, $\psi_i(q_j, x)=x$ for all $i\in\no$, $j\in\{1,\ldots, n'-n\}$, $x\in X$. Then the automaton transformations $(q_j)_i\in Aut(X^*)$ ($i\in\no$, $1\leq j\leq n'-n$) are all trivial, and the  transformations corresponding to the ``old'' states remain unchanged. Hence, by Propositions~\ref{p1}--\ref{p3}, we obtain the inclusion $\mathcal{TVA}(n)\supseteq \mathcal{AB}_2(n)$ for every $n>1$, which finishes the proof of Theorem~\ref{t1}.

\section{The proof of Theorem~\ref{t2}}\label{sec6}

Let $G=G(A)$ be the   group generated by an arbitrary Mealy automaton $A$ over the binary alphabet and let us denote the following subsets of $G$:
\begin{itemize}
\item $G_1=\{g\in G\colon \sigma_g=id\}=\{g\in G\colon \forall {x\in\{0,1\}}\;g(x)=x\}$ -- the first level stabilizer,
\item $I_G=\{g\in G\colon g^2=id\}$ -- the set of involutions,
\item $G^2=\{g^2\colon g\in G\}$ -- the set of squares in $G$.
\end{itemize}

\begin{lemma}\label{l1}
If $G^2=G_1^2$, then $G$  is  an elementary abelian 2-group.
\end{lemma}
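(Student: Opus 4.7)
The plan is to show that the hypothesis $G^2=G_1^2$ actually forces $G^2=\{id\}$; once every element of $G$ is an involution, $G$ is automatically abelian (since $(gh)^2=id$ implies $gh = h^{-1}g^{-1} = hg$), hence an elementary abelian $2$-group. Note first that since $\sigma_{g^2}=\sigma_g^2=id$ for every $g\in G$, we always have $G^2\subseteq G_1$, so every square fixes the first level of the tree.

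The key structural step I would establish is that, under the hypothesis, $G^2$ is closed under taking sections. If $s\in G^2=G_1^2$, I would write $s=h^2$ for some $h\in G_1$; since $\sigma_h=id$, the wreath recursion $h=(h|_0,h|_1)$ yields $s=(h|_0^2,\,h|_1^2)$, so $s|_x=(h|_x)^2$. Because $A$ is a Mealy automaton, $h|_x\in G$, and therefore $s|_x\in G^2$. Note that this step genuinely uses the equality $G^2=G_1^2$: we need to re-express an arbitrary square as the square of a root-permutation-trivial element before we can read off its sections cleanly.

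With this self-similarity in hand, I would conclude by descent on the length of the shortest word moved. Suppose for contradiction that some $s\in G^2$ is nontrivial, and let $w$ be a shortest word with $s(w)\neq w$. Since $s\in G_1$ fixes every word of length $\leq 1$, we have $|w|\geq 2$; writing $w=xw'$, we see that $s|_x$ moves $w'$, and by the minimality of $|w|$ it fixes every strictly shorter word, so its shortest moved word has length exactly $|w|-1$. But $s|_x\in G^2$ by the previous step, so iterating $|w|-1$ times produces a nontrivial element of $G^2$ whose shortest moved word has length $1$, contradicting $G^2\subseteq G_1$. Hence $G^2=\{id\}$ and the proof is complete.

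The main obstacle is really just packaging the self-similarity step correctly: once one notices that the hypothesis lets every square be taken in the form $h^2$ with $\sigma_h=id$, the section formula for $h^2$ becomes entrywise squaring and the descent is automatic. Everything else---the reduction to $G^2=\{id\}$, and the deduction of commutativity---is routine.
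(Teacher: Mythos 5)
Your proof is correct and follows essentially the same route as the paper: the heart of both arguments is the observation that the hypothesis lets every square be rewritten as $h^2$ with $h\in G_1$, so that sections of squares are again squares, forcing every section of $g^2$ to lie in $G_1$. The only cosmetic difference is that you finish by descent on a shortest moved word, whereas the paper runs the equivalent induction on $|w|$ and invokes the formula $g(w)=\sigma_{g_0}(x_1)\cdots\sigma_{g_{l-1}}(x_l)$ to conclude $g^2=id$.
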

\begin{proof}
Suppose that $G^2=G_1^2$. We have to show that $I_G=G$. So, let $g\in G$ be arbitrary. It can be seen by the formula~(\ref{gw}) that the equality $g^2=id$ follows from the following condition: $g^2|_w\in G_1$ for every $w\in \{0,1\}^*$. But, since $G_1^2\subseteq G_1$, it is enough to show that $g^2|_w\in G_1^2$ for every $w\in \{0,1\}^*$. We use induction on the length of a word $w$. The claim is obvious in the case $|w|=0$, as then we have $g^2|_w=g^2|_\epsilon=g^2\in G^2=G_1^2$. Suppose that $g^2|_w\in G_1^2$ for some $w\in\{0,1\}^*$. Then $g^2|_w=h^2$ for some $h\in G_1$. Since $h=(h|_0, h|_1)$, we obtain: $g^2|_w=((h|_0)^2, (h|_1)^2)$,
and consequently $g^2|_{wx}=(g^2|_w)|_x=(h|_x)^2\in G^2=G_1^2$ for $x\in\{0,1\}$. An inductive argument finishes the proof of Lemma~\ref{l1}.\qed
\end{proof}

\begin{lemma}\label{l2}
If $I_G\subseteq G_1$, then  $I_G=\{id\}$.
\end{lemma}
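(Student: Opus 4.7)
The plan is to take an arbitrary $g\in I_G$ and propagate the property ``being an involution'' down through all sections, and then conclude via the formula~(\ref{gw}) that $g$ acts trivially on every word.

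The key observation is the following descent step: if $h\in I_G$, then by hypothesis $h\in G_1$, so the wreath recursion of $h$ has trivial root permutation, i.e.\ $h=(h|_0, h|_1)$. Squaring this gives $h^2=((h|_0)^2, (h|_1)^2)=id$, hence $(h|_0)^2=(h|_1)^2=id$. Since $A$ is a Mealy automaton, sections of elements of $G(A)$ lie again in $G(A)$, so $h|_0, h|_1\in I_G$. This is the only real content of the argument.

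Iterating, I would prove by induction on $|w|$ that $g|_w\in I_G$ for every $w\in\{0,1\}^*$. The base case $w=\epsilon$ is immediate, and the inductive step is exactly the descent above applied to $h=g|_w$.

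Once every section of $g$ is an involution, the assumption $I_G\subseteq G_1$ tells us that $\sigma_{g|_w}=id$ for every $w$. Plugging this into the formula~(\ref{gw}) for the action of $g$ on an arbitrary word $x_1\ldots x_l$, each factor $\sigma_{g_i}(x_{i+1})$ equals $x_{i+1}$, so $g(w)=w$ for all $w\in\{0,1\}^*$, i.e.\ $g=id$. I do not expect any obstacle beyond correctly invoking that sections of elements of $G(A)$ stay in $G(A)$ in the Mealy setting, a fact already noted in Section~\ref{sec4}.
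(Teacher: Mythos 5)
Your proof is correct and follows essentially the same route as the paper's: an induction on $|w|$ showing $g|_w\in I_G$ for all $w$, using the hypothesis $I_G\subseteq G_1$ both to make the descent step work (trivial root permutation gives $h^2=((h|_0)^2,(h|_1)^2)$) and to conclude triviality of $g$ via formula~(\ref{gw}). Your explicit remark that sections of elements of $G(A)$ remain in $G(A)$ in the Mealy setting is a point the paper uses implicitly, but the arguments are otherwise identical.
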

\begin{proof}
Suppose that $I_G\subseteq G_1$ and let $g\in I_G$ be arbitrary. We must show that $g=id$. As before, we see by the formula~(\ref{gw}) that we need to show the condition: $g|_w\in G_1$ for every $w\in \{0,1\}^*$. Since $I_G\subseteq G_1$, it is enough to show that $g|_w\in I_G$ for every $w\in \{0,1\}^*$. We use  induction on the length of a word $w$. The claim is obvious in the case $|w|=0$. Suppose that $g|_w\in I_G$ for some $w\in\{0,1\}^*$. Since $I_G\subseteq G_1$, we have: $g|_w=(g|_{w0}, g|_{w1})$. Hence $id=(g|_w)^2=((g|_{w0})^2, (g|_{w1})^2)$. Thus $(g|_{wx})^2=id$ for $x\in\{0,1\}$, i.e. $g|_{wx}\in I_G$ for $x\in \{0,1\}$. An inductive argument finishes the proof.  \qed
\end{proof}

\begin{proposition}\label{pp}
If the group $G$ is abelian, then $G$ is torsion free or it is an elementary abelian 2-group.
\end{proposition}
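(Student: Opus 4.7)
The plan is to assume $G$ has some nontrivial torsion element and then deduce that $G$ must be an elementary abelian 2-group, using the two preceding lemmas together with Proposition~\ref{inclu}.

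First I would observe that, since the alphabet has two letters, $G_1$ has index at most $2$ in $G$: the map $g\mapsto\sigma_g$ sends $G$ into $Sym(\{0,1\})\cong C_2$ with kernel $G_1$. Now suppose $G$ is not torsion-free. By Proposition~\ref{inclu}, every non-identity element of finite order has order a power of $2$, so some appropriate power of a torsion element gives a non-trivial involution; that is, $I_G\neq\{id\}$. The contrapositive of Lemma~\ref{l2} then forces $I_G\not\subseteq G_1$, so there exists an involution $h\in I_G$ with $\sigma_h\neq id$. In particular $h\notin G_1$, which pins down $[G:G_1]=2$ and $G=G_1\cup hG_1$.

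Next I would show $G^2=G_1^2$. The inclusion $G_1^2\subseteq G^2$ is trivial. For the other direction, take any $g\in G$. If $g\in G_1$ then $g^2\in G_1^2$ and we are done. If $g\notin G_1$, then $gh\in G_1$ (since the quotient $G/G_1$ has order two). Using that $G$ is abelian and $h^2=id$, we compute
\[
(gh)^2 = g^2 h^2 = g^2,
\]
so $g^2=(gh)^2\in G_1^2$. Hence $G^2\subseteq G_1^2$, and equality holds. Now Lemma~\ref{l1} yields that $G$ is an elementary abelian 2-group, contradicting the assumption of torsion only if $G$ is not already of that form; in either case we conclude $G$ is torsion-free or elementary abelian of exponent two, as required.

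The only delicate step is the identity $G^2=G_1^2$, but once commutativity and the existence of the particular involution $h\notin G_1$ are in hand, it reduces to the one-line manipulation $(gh)^2=g^2h^2=g^2$. Everything else is an application of the dichotomy already encoded in Lemmas~\ref{l1} and~\ref{l2}; the role of Proposition~\ref{inclu} is merely to convert ``not torsion-free'' into ``contains an involution,'' which is essential because otherwise the hypothesis of Lemma~\ref{l2} could not be triggered.
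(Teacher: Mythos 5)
Your argument is correct and follows essentially the same route as the paper's proof: Proposition~\ref{inclu} produces a non-trivial involution, Lemma~\ref{l2} (contrapositive) supplies an involution $h\notin G_1$, the identity $g^2=g^2h^2=(gh)^2$ gives $G^2=G_1^2$, and Lemma~\ref{l1} concludes. The only difference is cosmetic phrasing at the end; the mathematics matches.
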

\begin{proof}
Suppose that the group $G$ is non-trivial, abelian and not torsion free. Then there is $g\in G$ with $1<o(g)<\infty$. By Proposition~\ref{inclu}, there is $r\in\no$ such that $o(g)=2^r$. Then $g^{2^{r-1}}$ is a non-trivial element from $I_G$, and consequently,  by Lemma~\ref{l2}, the set $I_G\setminus G_1$ is not empty. Let $h\in I_G\setminus G_1$. For every $g\in G$ we have: if $g\notin G_1$, then $gh\in G_1$ and $g^2=g^2h^2=(gh)^2\in G_1^2$. Consequently, we obtain $G^2=G_1^2$ and,  by Lemma~\ref{l1}, the group $G$ is an elementary abelian 2-group, which finishes the proof of Proposition~\ref{pp}. \qed
\end{proof}

\begin{remark}
The statement of Proposition~\ref{pp} can also be derived by using the concept of a $1/2$-endomorphism of a group, which defines its state-closed representation on the tree $\{0,1\}^*$ -- see~\cite{16} and Proposition~3.4 therein.
\end{remark}

Now, the inclusion $\mathcal{MA}(n)\subseteq \mathcal{FA}(n-1)\cup \mathcal{EA}_2(n)$ ($n\geq 2$) follows directly by Proposition~\ref{pp} and by the following proposition (see Proposition~3.1 in~\cite{17}):
\begin{proposition}[\cite{17}]
For every $n\in\no$ there is no $n$-state  Mealy automaton over the binary alphabet which generates the free abelian group of rank $n$.
\end{proposition}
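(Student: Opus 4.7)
The plan is to argue by contradiction. Suppose $A=(\{0,1\},\{a_1,\ldots,a_n\},\varphi,\psi)$ is a Mealy automaton with $G(A)\cong\mathbb{Z}^n$, and write each wreath recursion as $a_j=(a_{r_j},a_{s_j})\sigma_j$ with $\sigma_j\in\{id,\tau\}$. Since $\mathbb{Z}^n$ is Hopfian, the $n$ state automorphisms $a_1,\ldots,a_n$ must form a $\mathbb{Z}$-basis. In particular every $a_j$ has infinite order, and the injectivity of the wreath-product embedding $\phi$ forbids two distinct generators from sharing the same wreath recursion.

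If every $\sigma_j=id$, then the sections of states are again states with trivial root permutation, and a trivial induction on word length shows that each $a_j$ acts as the identity on $\{0,1\}^*$, contradicting non-triviality of the basis elements. So some $\sigma_{j^*}=\tau$. Comparing the wreath recursions of $a_ia_j$ and $a_ja_i$ in $G\wr C_2$ (they coincide since $G$ is abelian) yields the multiset identity $\{r_i,s_j\}=\{r_j,s_i\}$ whenever $\sigma_i=\sigma_j=\tau$, and the equality $r_j=s_j$ whenever $\sigma_j=id$. Combined with the constraint that distinct generators have distinct wreath recursions, this forces a dichotomy: either (i) every $j$ satisfies $r_j=s_j$, or (ii) exactly one index $j^*$ has $\sigma_{j^*}=\tau$ and $r_{j^*}\neq s_{j^*}$, while all other states are ``symmetric'' ($\sigma_j=id$ and $r_j=s_j$).

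In alternative (i), the explicit action formula $a_j(x_1x_2\cdots)=\sigma_j(x_1)\sigma_{r_j}(x_2)\sigma_{r_{r_j}}(x_3)\cdots$ makes $a_j^2=id$, contradicting the infinite order of a basis element. The heart of the argument, and the main obstacle, is alternative (ii). Iterating the map $j\mapsto r_j$ on the $n-1$ symmetric states, the requirement that distinct symmetric states give distinct non-identity tree automorphisms forces them to form a linear chain; after relabelling we have $b_0:=a_{j^*}$, $b_k=(b_{k-1},b_{k-1})$ for $1\leq k\leq n-1$, and $b_0=(b_p,b_q)\tau$ for some $p\neq q$ in $\{0,\ldots,n-1\}$. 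I would then look for $g=\sum_{j=0}^{n-1}\alpha_j b_j$ with $\phi(g)=(g,g)\cdot id$; any such $g$ fixes every finite word by a trivial induction on length, so $g=0$ in $G(A)$. Solving the self-similarity equations $g|_0=g|_1=g$ yields the back-substitution $\alpha_j=[p\geq j]+[q\geq j]$ for $0\leq j\leq n-1$, a nonzero integer vector (with $\alpha_0=2$) that provides a nontrivial $\mathbb{Z}$-linear relation among $b_0,\ldots,b_{n-1}$, contradicting their $\mathbb{Z}$-independence.
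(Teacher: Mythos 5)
Your argument is correct, but note that for this particular proposition the paper offers no proof at all: it is imported verbatim as Proposition~3.1 of reference~[17], so there is no in-paper argument to compare yours against. Taken on its own terms, your proof is sound and self-contained. The key steps all check out: a surjection $\mathbb{Z}^n\to\mathbb{Z}^n$ is an isomorphism, so the $n$ states form a basis and in particular are pairwise distinct, nontrivial and of infinite order; comparing the two wreath recursions of $a_ia_j=a_ja_i$ does give the multiset identity $\{r_i,s_j\}=\{r_j,s_i\}$ when both root permutations are $\tau$ and $r_j=s_j$ when $\sigma_j=id$ (using that basis elements are distinct), and together with the injectivity of the wreath embedding this yields exactly your dichotomy. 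In case (i) every section of $a_j^2$ has trivial root permutation, so $a_j^2=id$, contradicting infinite order. In case (ii) your chain normal form is justified: a symmetric state whose forward $r$-orbit misses $j^*$ would act trivially, and $j\mapsto r_j$ is injective on symmetric states because $a_j=(a_{r_j},a_{r_j})$ is determined by $a_{r_j}$; hence the depths to $j^*$ are exactly $1,\dots,n-1$. Finally, the self-similarity equations for $g=\sum\alpha_jb_j$ with $g=(g,g)$ reduce to $\alpha_k=\alpha_{k+1}+\tfrac{\alpha_0}{2}([k=p]+[k=q])$ with $\alpha_n:=0$, whose solution $\alpha_j=[p\geq j]+[q\geq j]$ (so $\alpha_0=2$) is consistent at $k=0$ precisely because $p,q\geq 0$; such a $g$ fixes every word, giving a nontrivial $\mathbb{Z}$-relation and the desired contradiction. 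This is in the same spirit as the structural analysis the present paper carries out elsewhere (Proposition~\ref{pp} and the ``sausage'' automata), and in fact your case (ii) shows that the only near-miss configuration is exactly a sausage-type automaton with no trivial state, which is a nice way to see why the rank drops to $n-1$.
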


Since the equality $\mathcal{MA}(1)=\mathcal{EA}_2(1)$ is trivial, we see by the observation at the end of the previous section that to show for every $n\geq 2$ the converse  inclusion (i.e. $\mathcal{MA}(n)\supseteq \mathcal{FA}(n-1)\cup \mathcal{EA}_2(n)$), we only need to  show that the free abelian group of rank $n-1$ as well as the elementary abelian 2-group of rank $n$ is generated by an $n$-state Mealy automaton over the binary alphabet.
The case of a free abelian group follows directly by the following well known construction of  so-called ``sausage'' automata (see~\cite{2} for example):

\begin{proposition}[\cite{2}]
Let $A=(\{0,1\}, Q, \varphi, \psi)$ be an $n$-state ($n\geq 2$) Mealy automaton in which $Q=\{a_1,\ldots, a_n\}$ and the transition and output functions are defined as follows:
$$
\varphi(a_j, x)=\left\{
\begin{array}{ll}
a_{j-1}, &if\;\;2<j\leq n,\\
a_n, &if\;\;j=2,\; x=0,\\
a_1, &if\;\;j=2,\; x=1,\\
a_1, &if\;\;j=1,
\end{array}
\right.\;\;\;
\psi(a_j, x)=\left\{
\begin{array}{ll}
\tau(x), &if\;\;j=2,\\
x, &otherwise
\end{array}
\right.
$$
for all $j\in\{1,2,\ldots, n\}$ and $x\in\{0,1\}$. Then the group $G(A)$ is isomorphic to the free abelian group of rank $n-1$.
\end{proposition}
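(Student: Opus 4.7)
My plan is to follow the strategy of Propositions~\ref{p1}--\ref{p3}: first read off the wreath recursions of the $n$ states, then verify that $G(A)$ is abelian, and finally show that the generators are free by an iterated sectioning argument. From the given $\varphi$, $\psi$, the recursion $a_1 = (a_1, a_1)$ forces $a_1 = id$, and substituting this back yields $a_2 = (a_n, id)\tau$ together with $a_j = (a_{j-1}, a_{j-1})$ for $3 \leq j \leq n$; hence $G(A) = \langle a_2, \ldots, a_n\rangle$ and the claim reduces to showing that these $n-1$ elements freely generate a free abelian group.

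For abelianness I would show directly that $a_i$ and $a_j$ commute for all $i, j \in \{2, \ldots, n\}$. The root permutations of $a_i a_j$ and $a_j a_i$ automatically agree (since $Sym(\{0, 1\})$ is abelian), and a case analysis in $i, j$ and $x \in \{0, 1\}$ shows that the two sections $(a_i a_j)|_x$ and $(a_j a_i)|_x$ are products of the \emph{same} pair of generators $a_l, a_{l'}$ (with $l, l' \in \{1, 2, \ldots, n\}$), only in opposite orders. Via formula~(\ref{gw}) and an induction on $|w|$, this reduces to commutativity of $a_l$ and $a_{l'}$ on strictly shorter words, which either follows from the inductive hypothesis or is trivial because $a_1 = id$.

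For freeness, any putative relation $g = a_2^{k_2} a_3^{k_3} \cdots a_n^{k_n} = id$ satisfies $\sigma_g = \tau^{k_2} = id$, so $k_2$ is even. Using $a_2^2 = (a_n, a_n)$ and $a_j = (a_{j-1}, a_{j-1})$ for $j \geq 3$, a direct computation together with abelianness yields
$$
g|_0 = g|_1 = a_2^{k_3} a_3^{k_4} \cdots a_{n-1}^{k_n} a_n^{k_2/2}.
$$
Thus the operation $g \mapsto g|_0$ cyclically shifts the exponent tuple and halves the entry that wraps around: $(k_2, k_3, \ldots, k_n) \mapsto (k_3, k_4, \ldots, k_n, k_2/2)$. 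Iterating $n-1$ times forces each $k_j$ in turn to be even and produces the new relation $a_2^{k_2/2}\cdots a_n^{k_n/2} = id$. A standard 2-adic descent then yields $2^t \mid k_j$ for every $t \in \no$, forcing $k_j = 0$ for all $j$, so $G(A)$ is isomorphic to the free abelian group of rank $n-1$.

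The most delicate point, I expect, is the derivation of the cyclic-shift-with-halving formula for $g|_0$: because $a_2$ carries the nontrivial root permutation $\tau$, one must keep careful track of the order in which sections multiply and of the ``twist'' introduced by $\sigma_{a_2^{k_2}}$ before concluding that the remaining factors of $g$ split cleanly and uniformly across the two subtrees. Once this formula is in hand, the abelianness case analysis and the 2-adic descent are both routine, and the argument runs in essentially the same spirit as the end of the proof of Proposition~\ref{p3}.
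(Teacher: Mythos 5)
Your argument is correct, but note that the paper itself gives no proof of this proposition: it is stated with the citation \cite{2} and used as a black box. What you have written is essentially the proof one would expect the author to supply if one were needed, and it runs exactly parallel to the paper's proofs of the neighboring results (the commutation check via matching section pairs and formula~(\ref{gw}), as in Proposition~\ref{p2} and the elementary abelian construction, followed by a $2$-adic descent on the exponent tuple, as in Proposition~\ref{p3}). Your computations check out: $a_1=(a_1,a_1)$ gives $a_1=id$, hence $a_2=(a_n,id)\tau$ and $a_j=(a_{j-1},a_{j-1})$ for $j\geq 3$; the product rule gives $a_2^2=\bigl(a_n\cdot a_2|_{\tau(0)},\, id\cdot a_2|_{\tau(1)}\bigr)=(a_n,a_n)$, so the ``twist'' you flag disappears once $k_2$ is known to be even, and the cyclic-shift-with-halving map $(k_2,\ldots,k_n)\mapsto(k_3,\ldots,k_n,k_2/2)$ on exponent tuples is exactly right. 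The only point worth making explicit in a final write-up is that the commutativity induction is a simultaneous induction over all pairs of generators and all word lengths (since sectioning the pair $(2,j)$ produces the pair $(n,j-1)$, an induction on indices alone would not close up), but your appeal to induction on $|w|$ already handles this correctly.
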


The case of an elementary abelian 2-group follows by the following
\begin{proposition}
Let  $A=(\{0,1\}, Q, \varphi, \psi)$ be an $n$-state ($n\in\no$) Mealy automaton in which $Q=\{a_1,\ldots, a_n\}$ and  the transition and output functions are defined as follows:
$$
\varphi(a_j, x)=\left\{
\begin{array}{ll}
a_{j-1}, &if\;\;2\leq j\leq n,\\
a_n, &if\;\;j=1,
\end{array}
\right.\;\;\;
\psi(a_j, x)=\left\{
\begin{array}{ll}
\tau(x), &if\;\;j=1,\\
x, &otherwise
\end{array}
\right.
$$
for all $j\in\{1,2\ldots, n\}$ and $x\in\{0,1\}$. Then $G(A)$ is isomorphic to the elementary abelian 2-group of rank $n$.
\end{proposition}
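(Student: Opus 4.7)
The plan is to exploit the highly constrained form of the wreath recursions. Setting $a_{j,1} := (a_j)_1 \in G(A)$ for $1 \leq j \leq n$ (which, since $A$ is Mealy, does not depend on the transition index), the defining formulas give $a_{1,1} = (a_{n,1}, a_{n,1})\tau$ and $a_{j,1} = (a_{j-1,1}, a_{j-1,1})$ for $2 \leq j \leq n$. The crucial observation is that for every $j$ the two sections of $a_{j,1}$ at $0$ and at $1$ coincide and equal $a_{j-1,1}$, with indices understood cyclically in $\{1, \ldots, n\}$ under the convention $a_{0,1} := a_{n,1}$.

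I would then extend this to arbitrary products. For any sequence $i_1, \ldots, i_k \in \{1, \ldots, n\}$ and $h := a_{i_1,1} \cdots a_{i_k,1}$, the product rule for wreath recursions from Section~\ref{sec4} together with the coincidence of sections yields $h|_0 = h|_1 = a_{i_1 - 1, 1} \cdots a_{i_k - 1, 1}$ (cyclic shift of the indices), independently of the partial root permutations arising in the product formula; and $\sigma_h = \tau^{m_1}$, where $m_j := |\{\,l : i_l = j\,\}|$ counts occurrences of the $j$-th generator in the product.

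A trivial induction on $s$ then shows that for every $s \geq 0$ and every $w \in \{0, 1\}^s$, $h|_w$ equals the $s$-fold cyclic shift $a_{i_1 - s, 1} \cdots a_{i_k - s, 1}$ and carries root permutation $\tau^{m_{j(s)}}$, where $j(s) := (s \bmod n) + 1$. Applying formula~(\ref{gw}), $h$ equals the identity automorphism of $\{0,1\}^*$ if and only if all these levelwise root permutations are trivial, equivalently, if and only if $m_j$ is even for every $j \in \{1, \ldots, n\}$.

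This criterion simultaneously yields $a_{j,1}^2 = id$ for each $j$, the commutativity of any two generators, and the absence of any non-trivial relation beyond $a_{j,1}^2 = id$; more concretely, two positive words in the generators represent the same element of $G(A)$ iff their exponent vectors coincide modulo~$2$. Consequently, the homomorphism from the elementary abelian $2$-group of rank $n$ to $G(A)$ sending the $j$-th standard generator to $a_{j,1}$ is a well-defined isomorphism. I do not foresee a substantial obstacle: once the coincidence $a_{j,1}|_0 = a_{j,1}|_1$ is noticed, everything reduces to careful bookkeeping with the cyclic index shift, with the only delicate point being consistency of the wrap-around convention $a_{0,1} = a_{n,1}$.
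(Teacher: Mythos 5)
Your proof is correct and follows essentially the same route as the paper's: both rest on the observation that each generator has coinciding sections at $0$ and $1$ equal to the cyclically preceding generator, so that the section of any product at a word of length $s$ is the $s$-fold cyclic shift and its root permutation $\tau^{m_{j(s)}}$ detects the parity of each exponent. The only difference is organizational — you package this as a single triviality criterion for arbitrary words and deduce commutativity, the involution relations, and independence all at once, whereas the paper verifies these three facts by separate (but computationally identical) section arguments.
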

\begin{proof}
To simplify notation we will use the notation $a_j$ ($1\leq j\leq n$) for the automaton transformation $(a_j)_1\in G(A)$. By the formulae for transition and output functions, we have the following wreath recursions for the generators $a_j$:
\begin{equation}\label{11}
a_j=\left\{
\begin{array}{ll}
(a_{j-1}, a_{j-1}), &{\rm if}\;\;2\leq j\leq n,\\
(a_n, a_n)\tau, &{\rm if}\;\;j=1.
\end{array}
\right.
\end{equation}
In particular, for any $1\leq j,j'\leq n$ and any $x\in\{0,1\}$ there are $1\leq i,i'\leq n$ and $\pi\in\{id, \tau\}$ such that
$(a_ja_{j'})|_x=a_{i}a_{i'}$, $(a_{j'}a_{j})|_x=a_{i'}a_{i}$ and $\sigma_{a_ja_{j'}}=\sigma_{a_{j'}a_j}=\pi$. Thus for any $1\leq j,j'\leq n$ the elements $a_j$, $a_{j'}$ commute, and hence the group $G(A)$ is abelian. For every $1\leq j\leq n$ we have: $a_j(0^n)=0^{j-1}10^{n-j}$, and hence $a_j\neq id$. On the other hand, by~(\ref{11}), we see that for every $1\leq j\leq n$ there is $1\leq i\leq n$ such that $a_j^2=(a_i^2, a_i^2)$. Thus $a_j^2=id$ for every $1\leq j\leq n$. Finally, suppose that for some integers $k_j$ ($1\leq j\leq n$) the product $g=a_1^{k_1}\cdot\ldots\cdot a_n^{k_n}$ represents the trivial element in the group $G(A)$. Then the root permutation $\sigma_g$ is equal to $\tau^{k_1}$, and hence $2\mid k_1$. We also see by wreath recursions~(\ref{11}) that for every $t\in\{0,\ldots, n-1\}$ and every word $w\in \{0,1\}^t$ the section $g|_w$ is equal to
$$
a_1^{k_{t+1}}\cdot a_2^{k_{t+2}}\cdot\ldots\cdot a_{n-t}^{k_n}\cdot a_{n-t+1}^{k_1}\cdot\ldots\cdot a_n^{k_t}.
$$
Thus  $id=\sigma_{g|_w}=\tau^{k_{t+1}}$, and consequently $2\mid k_{t+1}$, which implies the isomorphism $G(A)\simeq \langle a_1\rangle\oplus\ldots\oplus\langle a_n\rangle\simeq C_2^n$.\qed
\end{proof}


\begin{thebibliography}{00}
\bibitem{8} V. Aleshin. A free group of finite automata. {\it Mosc. Univ. Math. Bull.} 38(1983), no.4, 10--13.
\bibitem{15} I. Bondarenko, R. Grigorchuk, R. Kravchenko, Y. Muntyan, V. Nekrashevych, D. Savchuk, and Z. Sunik. Groups generated by 3-state automata over a 2-letter alphabet. II. {\it Journal of Mathematical Sciences}, Vol. 156, No. 1, 2009.
\bibitem{22} A. Erschler. Piecewise automatic groups. {\it Duke Math. J.}, 134:3 (2006), 591--613.
\bibitem{23} A. Erschler. Automatically presented groups. {\it Groups Geom. Dyn.}, (2007), 47--59.
\bibitem{25} Y.Glasner, S.Mozes. Automata and square complexes. {\it Geom. Dedicata} 111(2005), 43–--64.
\bibitem{34} R. Grigorchuk. Solved and Unsolved Problems Around One Group. in {\it Infinite Groups: Geometric, Combinatorial and Dynamical Aspects. Progress in Mathematics Series}, Vol. 248, 2005.
\bibitem{2} R. Grigorchuk, V. Nekrashevych, V. Sushchanskyy.  Automata, Dynamical Systems and Groups. {\it Proceedings of Steklov Institute of Mathematics}, 231:128--203, 2000.
\bibitem{26} R. Grigorchuk. Some topics in the dynamics of group actions on rooted trees. {\it Proceedings of the Steklov Institute of Mathematics}. July 2011, Volume 273, Issue 1, pp 64--175.
\bibitem{27} V.B. Kudryavtsev, S.V. Aleshin, and A.S. Podkolzin. Introduction to the Theory of Automata {\it (Nauka, Moscow,
1985)} [in Russian].
\bibitem{1} B. Mikolajczak. Algebraic and structural automata theory, translated from the {\it Polish Annals of Discrete Mathematics}, Vol. 44 (North-Holland Publishing Co., Amsterdam, 1991).
\bibitem{3} V. Nekrashevych. Self-similar Groups. {\it Am. Math. Soc., Providence, RI, 2005}, Math. Surv. Monogr. 117.
\bibitem{16} V. Nekrashevych, S.Sidki. Automorphisms of the binary tree: state-closed subgroups and dynamics of 1/2-endomorphisms, In T.W. Müller, editor, {\it Groups: Topological, Combinatorial and Arithmetic Aspects}, volume 311 of LMS Lecture Notes Series, pages 375--404, 2004.
\bibitem{12} M. Vorobets, Ya. Vorobets. On a free group of transformations defined by an automaton, {\it Geom. Dedicata}, 124 (2007), 237--249.
 \bibitem{4} A. Woryna. On permutation groups generated by time-varying Mealy automata. {\it Publ. Math. Debrecen}, 67(1-2):115--130, 2005.
\bibitem{5} A. Woryna. On generation of wreath products of cyclic groups by two state time varying Mealy automata. {\it Int. J. Algebra Comput.}, 16(2):397--415, 2006.
\bibitem{6} A. Woryna.  The concept of duality for automata over a changing alphabet and generation a free group by such automata. {\it Theoret. Comput. Sci.}, 412(45):6420--6431, 2011.
\bibitem{17} A. Woryna. Automaton ranks of some self-similar groups. {\it Lecture Notes in Computer Science}, 2012, Volume 7183/2012, 514--525.
\bibitem{7} A. Woryna.  The concept of self-similar automata over a changing alphabet and lamplighter groups generated by such automata. {\it Theoret. Comput. Sci.}, 482:96--110, 2013.
\bibitem{21} A. Woryna. On some universal construction of minimal topological generating sets for inverse limits of iterated wreath products of non-Abelian finite simple groups. {\it J Algebr Comb}, Vol. 42, Issue 2, pp 365-390, 2015.
\end{thebibliography}
\end{document}